\theoremstyle{plain}
\newtheorem{theorem}{Theorem}[section]
\newtheorem{lemma}[theorem]{Lemma}
\theoremstyle{definition}
\newtheorem{definition}[theorem]{Definition}
\newtheorem{example}[theorem]{Example}
\newtheorem{question}[theorem]{Question}
\newtheorem*{question*}{Question}
\numberwithin{equation}{section}
\newcommand{\Z}{\ensuremath{\mathbb{Z}}\xspace}
\newcommand{\N}{\ensuremath{\mathbb{N}}\xspace}
\newcommand{\setof}[2]{\left\{ #1 \;\middle|\; #2 \right\}}
\title[The Algebraic Kirchberg-Phillips Question]{The Algebraic Kirchberg-Phillips Question for Leavitt path algebras}
\date{\today}
\author{Efren Ruiz}
\address{Department of Mathematics, University of Hawaii, Hilo, 200 W.~Kawili St., Hilo, Hawaii, 96720-4091 USA}
\email{ruize@hawaii.edu}
\subjclass[2020]{Primary: 16S88, Secondary: 46L35, 37B10}
\keywords{Leavitt Path Algebras, Cuntz splice}
\thanks{The author is indebted to Gene Abrams and Mark Tomforde for their valuable comments during the process of writing this paper, and their insistence that this paper should be written.}
\begin{document}


\begin{abstract}
The Algebraic Kirchberg-Phillips Question for Leavitt path algebras asks whether unital $K$-theory is a complete isomorphism invariant for unital, simple, purely infinite Leavitt path algebras over finite graphs. Most work on this problem has focused on determining whether (up to isomorphism) there is a unique unital, simple, Leavitt path algebra with trivial $K$-theory (often reformulated as the question of whether the Leavitt path algebras $L_2$ and $L_{2_-}$ are isomorphic).  However, it is unknown whether a positive answer to this special case implies a positive answer to the Algebraic Kirchberg-Phillips Question.  In this note, we pose a different question that asks whether two particular non-simple Leavitt path algebras $L_k(\mathbf{F}_*)$ and $L_k(\mathbf{F}_{**})$ are isomorphic, and we prove that a positive answer to this question implies a positive answer to the Algebraic Kirchberg-Phillips Question.  
\end{abstract}

\maketitle

\section{Introduction}

One of the most important open questions in the theory of Leavitt path algebras is the \emph{Algebraic Kirchberg-Phillips Question}, which asks whether the pointed $K_0$-group provides a complete isomorphism invariant for unital, simple, purely infinite Leavitt path algebras of finite graphs (see \cite[Question~5.4.8]{WCRH}).  This question first appeared over a decade ago in \cite{AALP08}, and at this point in time positive answers to the question are only known for special sub-classes of graphs and/or in the presence of additional conditions, such as requiring the matrices defining the $K_0$-groups to have determinants with the same sign \cite{AALP08, ALPS11}.  The success of most of these special cases relies heavily on Symbolic Dynamics results due to Franks \cite{Franks-flow} and Huang \cite{Huang-flow, Huang-auto}.  For one situation where isomorphism is achieved directly see \cite{AAP08}.  Most efforts after \cite{AALP08, AAP08, ALPS11} have focused on whether (up to isomorphism) there is a unique unital, simple, Leavitt path algebra with trivial $K$-theory.  The results of \cite{ALPS11} imply that there are at most two isomorphism classes, the classes defined by the Leavitt algebra $L_2$ and the Leavitt path algebra $L_{2_-}$.  The latter is the Leavitt path algebra of the graph obtained by Cuntz-splicing the graph defining $L_{2}$.  To determine whether these two classes are equal is known as the ``$L_{2}$--$L_{2_-}$-Question''.  

The $L_{2}$--$L_{2_-}$-Question and the Algebraic Kirchberg-Phillips Question were inspired by the corresponding questions for graph $C^*$-algebras (and Cuntz-Krieger algebras).  It was shown by Mikael R{\o}rdam in \cite[Theorem~7.2]{Ror95} (as outlined by Joachim Cuntz in \cite{Cun86}) that the uniqueness (up to $*$-isomorphism) of a unital, simple graph $C^*$-algebra with trivial $K$-theory is necessary and sufficient for $K$-theory to be a complete $*$-isomorphism invariant.  The key analytical tool used to obtain sufficiency is Voiculescu's theorem \cite{DV76} (see also \cite[Theorem~2]{MR1396721}), which allows one to extend an isomorphism between two unital, simple, purely infinite graph $C^*$-algebras with trivial $K$-theory to an isomorphism between two particular extension $C^*$-algebras.  The resulting $*$-isomorphism is then used to show the graph $C^*$-algebras associated to $E$ and its Cuntz-splice $E_-$ are Morita equivalent.  In \cite{ERRS-InvCS} we showed that Cuntz's argument could be adapted so that one could view these extension $C^*$-algebras as relative graph $C^*$-algebras, a realization we further exploited in \cite{ERRS-InvCS, segrerapws:gcgcfg, segrerapws:ccuggs} to complete the classification of all (possibly non-simple) unital graph $C^*$-algebras.

In the work of Cuntz and R{\o}rdam, the classification of unital, simple graph $C^*$-algebras is comprised of three main steps.

\medskip

\noindent \textbf{Step C1:} Prove that the $C^*$-algebras $\mathcal{O}_2$ and $\mathcal{O}_{2_-}$ are $*$-isomorphic (see \cite[Problem~2]{Cun86}).

\medskip

\noindent \textbf{Step C2:} Prove, using Voiculescu's theorem and the result from Step~C1, $C^*(E)$ and $C^*(E_{-})$ are Morita equivalent for any unital, simple, purely infinite graph $C^*$-algebra $C^*(E)$. 

\medskip

\noindent \textbf{Step C3:} Argue, using results from symbolic dynamics and the result from Step~C2, that any two unital, simple, purely infinite graph $C^*$-algebras with the same $K$-theory are $*$-isomorphic.

\medskip

Inspired by the classification of graph $C^*$-algebras, the authors of \cite{ALPS11} undertook a classification of Leavitt path algebras, formulating and making partial progress on the Algebraic Kirchberg-Phillips Question.  Further inspired by Cuntz and R{\o}rdam's work, the authors of \cite{ALPS11} outlined analogous steps that would provide a classification of  unital, simple, purely infinite, Leavitt path algebras over finite graphs.  Specifically, they described

\medskip

\noindent \textbf{Step L1:} Prove that $L_2$ and $L_{2_-}$ are isomorphic.

\medskip

\noindent \textbf{Step L2:} Embed $L_2$ and $L_{2_-}$ into $R := \operatorname{End} V$ for a certain vector space $V$, let $u$ be a particular element of $R$, let $\mathcal{E}_2$ be the subalgebra of $R$ generate by $L_2$ and $u$, and let $\mathcal{E}_{2_-}$ be the subalgebra of $R$ generate by $L_{2_-}$ and $u$.  Then prove that any isomorphism $\tau : L_2 \to L_{2_-}$ extends to an isomorphism $T : \mathcal{E}_2 \to \mathcal{E}_{2_-}$ such that $T(u) = u$.  (The precise statement, with complete definitions of all objects involved, can be found in \cite[p.223--224]{ALPS11}).

\medskip

\noindent \textbf{Step L3:} Argue, using results from symbolic dynamics and the result from Step~L2, that any two unital, simple, purely infinite Leavitt path algebras with the same $K$-theory are $*$-isomorphic.

\medskip

The main results of \cite{ALPS11} show that Step~L3 is true once Step~L1 and Step~L2 are established.  Step~L2 is significantly more involved than Step~C2, due to the fact that there is no algebraic analog of Voiculescu's theorem.  Consequently, Step~L2 describes (in purely algebraic terms) what is needed to account for this absence.

Since the publication of \cite{ALPS11} there has been intense focus on Step~L1, and the still open question of whether $L_2$ and $L_{2_-}$ are isomorphic.  By contrast, very little attention has been paid to Step~L2.  In fact, many newcomers to the subject of Leavitt path algebras, as well as some veterans who are not working directly on classification, have forgotten that answering the $L_2$--$L_{2_-}$-Question in the affirmative is only the first step in answering the Algebraic Kirchberg-Phillips Question.

The $L_2$--$L_{2_-}$-Question has been open for over a decade, suggesting that it may be a very difficult problem.  Furthermore, Step~L2 seems particularly daunting due to its abstract nature, the need to work in an ambient endomorphism ring containing copies of $L_2$ and $L_{2_-}$, and the requirement that one can extend any isomorphism of $L_2$ and $L_{2_-}$ in a particular way.

Inspired by our use of relative graph $C^*$-algebras that appears in applications of Voiculescu's theorem, we show in this note that we may replace Step~L1 and Step~L2 by a single step asking whether two particular non-simple Leavitt path algebras are isomorphic.  Specifically, we introduce the following question, where the graphs $\mathbf{F}_*$ and $\mathbf{F}_{**}$ are defined in Definition~\ref{def-fstar}.  These graphs are precisely the graphs used to identify certain relative Cohn path algebras as Leavitt path algebras (see paragraph after Definition~\ref{def-cohn-graph}).

\begin{question*}[The Cohn $L_{2}$--$L_{2_-}$-Question]
    Are the Leavitt path algebras $L_k(\mathbf{F}_*)$ and $L_k(\mathbf{F}_{**})$ isomorphic?
\end{question*}

We prove in Theorem~\ref{thm-main} that an affirmative answer to the Cohn $L_{2}$--$L_{2_-}$-Question implies a positive answer to the Algebraic Kirchberg-Phillips Question.  We shall also see that an affirmative answer to the Cohn $L_{2}$--$L_{2_-}$-Question implies an affirmative answer to the $L_2$--$L_{2_-}$-Question.

In light of these results, we argue that the $L_2$--$L_{2_-}$-Question and the Cohn $L_{2}$--$L_{2_-}$-Question should be used in tandem, providing a two-pronged approach as we seek an answer to the Algebraic Kirchberg-Phillips Question.  More specifically:  If one sets out to prove that the answer to the Algebraic Kirchberg-Phillips Question is YES, then one should try to establish an affirmative answer to the Cohn $L_{2}$--$L_{2_-}$-Question. Conversely, if one sets out to prove that the answer to the Algebraic Kirchberg-Phillips Question is NO, then one should try to establish a negative answer to the $L_2$--$L_{2_-}$-Question.

\section{Main Result}

Our main results will involve only  SPI graphs, which is the class of graphs whose Leavitt path algebras are precisely the simple, purely infinite Leavitt path algebras (see \cite[Theorems 2.9.7 and 3.1.10]{AAM-book}).

\begin{definition}
An \emph{SPI} graph is a directed graph that satisfies 
\begin{enumerate}[(1)]
\item every cycle has an exit;

\item every vertex connects to every singular vertex and every infinite path; and

\item the graph has at least one cycle.
\end{enumerate}
\end{definition}

The Algebraic Kirchberg-Phillips Question can now be phrased as follows.

\begin{question*}[The Algebraic Kirchberg-Phillips Question]\label{WCRH-question}
Let $\mathsf{k}$ be a field, and let $E$ and $F$ be two finite SPI graphs.  
Assume 
$$
(K_0( L_{\mathsf{k}}(E)) ,  [ 1_{L_{\mathsf{k}}(E)} ] ) \cong (K_0(L_{\mathsf{k}}(F)), [ 1_{L_{\mathsf{k}}(F)} ] ),
$$
that is, there exists an isomorphism $\varphi$ from $K_0( L_{\mathsf{k}}(E))$ to $K_0(L_{\mathsf{k}}(F))$ such that $\varphi( [1_{L_{\mathsf{k}}(E)} ] )= [ 1_{L_{\mathsf{k}}(F)} ]$ (such an isomorphism is called \emph{pointed}).  Is $L_{\mathsf{k}}(E)$ isomorphic to $L_{\mathsf{k}}(F)$ as rings?  
\end{question*}

For a graph $E$, let $A_E$ be the adjacency matrix of $E$.  Then the rows and columns of $A_E$ are indexed by the vertex set $E^0$ and the $(v,w)$-entry of $A_E$ is equal to the number of edges from $v$ to $w$.  By \cite[Corollary~2.7]{ALPS11}, for finite SPI graphs $E$ and $F$, if
$$
(K_0( L_{\mathsf{k}}(E)) , [ 1_{L_{\mathsf{k}}(E)} ],  \det( \mathsf{I} - A_E^t) ) \cong ( K_0( L_{\mathsf{k}}(F)) , [ 1_{L_{\mathsf{k}}(F)} ], \det( \mathsf{I} - A_F^t) ),
$$
that is, there is a pointed isomorphism between the $K_0$-groups and the indicated determinants are equal, then $L_{\mathsf{k}}(E)$ is isomorphic to $L_{\mathsf{k}}(F)$.  The determinant condition is the essential assumption to apply the results of Franks \cite{Franks-flow} and Huang \cite{Huang-flow, Huang-auto} to Leavitt path algebras.  For the graphs 
\begin{align*}
\mathbf{E}_* \  = \ \ \ \ \xymatrix{
  \bullet^{v_1} \ar@(ul,ur)[]^{e_{1}} \ar@/^/[r]^{e_{2}} & \bullet^{v_2} \ar@(ul,ur)[]^{e_{4}} \ar@/^/[l]^{e_{3}}
}
\end{align*}
\begin{align*}
\mathbf{E}_{**} \  =  \ \ \ \ \xymatrix{
	\bullet^{ w_{4} } \ar@(ul,ur)[]^{f_{10}}  \ar@/^/[r]^{ f_{9} } & \bullet^{ w_{3} } \ar@(ul,ur)[]^{f_{7}} \ar@/^/[r]^{ f_{6} }  \ar@/^/[l]^{f_{8}} &  \bullet^{w_1} 				\ar@(ul,ur)[]^{f_{1}} \ar@/^/[r]^{f_{2}} \ar@/^/[l]^{f_{5}}
	& \bullet^{w_2} \ar@(ul,ur)[]^{f_{4}} \ar@/^/[l]^{f_{3}}
	}
\end{align*}
we have $K_0(L_{\mathsf{k}}(\mathbf{E}_*))$ and $K_0 (L_{\mathsf{k}}(\mathbf{E}_{**} ))$ are isomorphic to the trivial group with
$$
\det( \mathsf{I} - A_{\mathbf{E}_*}^t) =-1 \quad \text{and} \quad \det( \mathsf{I} - A_{\mathbf{E}_{**}}^t)=1.
$$ 
Thus, \cite[Corollary~2.7]{ALPS11} cannot be applied to prove that $L_{\mathsf{k}}(\mathbf{E}_*)$ and $L_{\mathsf{k}}(\mathbf{E}_{**})$ are isomorphic. The Leavitt path algebras of the graphs $\mathbf{E}_*$ and $\mathbf{E}_{**}$ are $L_2$ and $L_{2_-}$ respectively.  The graph $\mathbf{E}_{**}$ is what is called the Cuntz-Splice of $\mathbf{E}_*$ at the vertex $v_1$ (see Definition~\ref{def-cuntzsplice}).

To state our main result, we need the Cohn graphs of $\mathbf{E}_*$ and $\mathbf{E}_{**}$ which we will denote by $\mathbf{F}_*$ and $\mathbf{F}_{**}$ (see Definition~\ref{def-cohn-graph} and the paragraph after the definition).

\begin{definition}[The Cohn Graphs of $\mathbf{E}_*$ and $\mathbf{E}_{**}$]\label{def-fstar}
\begin{align*}
\mathbf{F}_* \ = \ \ \ \ \xymatrix{
  \bullet^{v_1} \ar[d]_{e_{1}'} \ar@(ul,ur)[]^{e_{1}} \ar@/^/[r]^{e_{2}} & \bullet^{v_2} \ar@(ul,ur)[]^{e_{4}} \ar@/^/[l]_{e_{3}} \ar@/^/[dl]^{e_{3}'} \\
  \bullet_{v_1'} & 
}
\end{align*} 
and
\begin{align*}
\mathbf{F}_{**} \ = \ \ \ \ \xymatrix{
	\bullet^{w_4} \ar@(ul,ur)[]^{f_{10}}  \ar@/^/[r]^{ f_{9} } &
	\bullet^{w_3} \ar@/_/[dr]_{f_6'} \ar@(ul,ur)[]^{f_{7}} \ar@/^/[r]^{ f_{6} }  \ar@/^/[l]_{f_{8}} &
	\bullet^{w_1} \ar[d]_{f_1'} \ar@(ul,ur)[]^{f_{1}} \ar@/^/[r]^{f_{2}} \ar@/^/[l]_{f_{5}} &
	\bullet^{w_2} \ar@/^/[dl]^{f_3'} \ar@(ul,ur)[]^{f_{4}} \ar@/^/[l]_{f_{3}} \\
	& & \bullet_{w_1'} & 
}
\end{align*}
\end{definition}

The following theorem is the main result we shall prove in this paper.

\begin{theorem}\label{thm-main}
If the Leavitt path algebras $L_{\mathsf{k}}(\mathbf{F}_{*})$ and $L_{\mathsf{k}}(\mathbf{F}_{**})$ are isomorphic, then the Algebraic Kirchberg-Phillips Question has a positive answer. 
\end{theorem}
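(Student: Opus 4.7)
The plan is to deduce from the hypothesized isomorphism $L_{\mathsf{k}}(\mathbf{F}_*) \cong L_{\mathsf{k}}(\mathbf{F}_{**})$ the Cuntz-splice invariance of Leavitt path algebras --- that $L_{\mathsf{k}}(E) \cong L_{\mathsf{k}}(E_-)$ for every finite SPI graph $E$ and its Cuntz-splice $E_-$ at a suitable vertex --- and then combine this with \cite[Corollary~2.7]{ALPS11} to conclude. Since Cuntz-splicing preserves the pointed $K_0$-group but flips the sign of $\det(\mathsf{I}-A^t)$, this invariance together with \cite[Corollary~2.7]{ALPS11} settles the Algebraic Kirchberg-Phillips Question at once: for finite SPI graphs $E, F$ admitting a pointed $K_0$-isomorphism, either $\det(\mathsf{I}-A_E^t)$ and $\det(\mathsf{I}-A_F^t)$ already share a sign and the corollary applies directly, or one first replaces $E$ by $E_-$ to arrange matching signs without disturbing the isomorphism class of the Leavitt path algebra.

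Extracting Cuntz-splice invariance from the hypothesis is the bulk of the argument, and is designed to parallel the $C^{*}$-algebraic route outlined in the introduction. Given $E$ and a vertex $v$ of $E$ on a cycle at which the splice is performed, I would pass to the Cohn graphs $E^{(v)}$ and $(E_-)^{(v_-)}$ obtained by breaking $(\mathrm{CK2})$ only at $v$ and at the corresponding vertex $v_-$ of $E_-$; this is exactly the construction that produces $\mathbf{F}_*$ from $\mathbf{E}_*$ and $\mathbf{F}_{**}$ from $\mathbf{E}_{**}$. In each case, the ideal of $L_{\mathsf{k}}(E^{(v)})$ (respectively $L_{\mathsf{k}}((E_-)^{(v_-)})$) generated by the newly adjoined sink has quotient $L_{\mathsf{k}}(E)$ (respectively $L_{\mathsf{k}}(E_-)$). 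The plan is to produce an isomorphism $L_{\mathsf{k}}(E^{(v)}) \cong L_{\mathsf{k}}((E_-)^{(v_-)})$ by extending the hypothesized $\Phi \colon L_{\mathsf{k}}(\mathbf{F}_*) \to L_{\mathsf{k}}(\mathbf{F}_{**})$ across the portion of the graphs lying outside the Cuntz-splice region, which is common to both Cohn graphs, and then to push this isomorphism through to the quotient.

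The main obstacle is executing this ``extension across the exterior'' rigorously. The hope is to realize $L_{\mathsf{k}}(E^{(v)})$ as a pushout of $L_{\mathsf{k}}(\mathbf{F}_*)$ with an ``exterior'' subalgebra, amalgamated over the boundary data at $v$ (the vertex idempotent together with edges not belonging to the local Cohn gadget), and analogously for $L_{\mathsf{k}}((E_-)^{(v_-)})$ using $L_{\mathsf{k}}(\mathbf{F}_{**})$. The universal property of this pushout would then allow $\Phi$, combined with the identity on the exterior, to lift to an isomorphism of the ambient Cohn Leavitt path algebras, provided $\Phi$ respects the shared boundary data. Since an arbitrary $\Phi$ need not do so, one would likely have to post-compose $\Phi$ with inner or gauge automorphisms of $L_{\mathsf{k}}(\mathbf{F}_{**})$ to enforce compatibility at the boundary. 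This boundary-matching step is precisely the algebraic substitute for the extension supplied by Voiculescu's theorem in the $C^{*}$-setting, and I expect it to carry the bulk of the technical weight of the proof.
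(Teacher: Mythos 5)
Your high-level reduction is the right one --- flip the determinant sign with a Cuntz splice and then invoke \cite[Corollary~2.7]{ALPS11} and the Franks--Huang machinery --- and this matches the skeleton of the paper's argument. But the core step, extracting a usable consequence of $L_{\mathsf{k}}(\mathbf{F}_*)\cong L_{\mathsf{k}}(\mathbf{F}_{**})$ for a general finite SPI graph $E$, is misconceived as written. The hypothesis compares the relative Cohn algebras of the two \emph{gadgets} $\mathbf{E}_*$ and $\mathbf{E}_{**}$, namely $C_{\mathsf{k}}(\mathbf{E}_*,\{v_2\})$ and $C_{\mathsf{k}}(\mathbf{E}_{**},\{w_2,w_3,w_4\})$; these arise naturally as the subalgebras generated by the spliced-on region inside $L_{\mathsf{k}}(E_{u,-})$ and $L_{\mathsf{k}}(E_{u,--})$ respectively (relation (CK2) fails at $v_1$, resp.\ $w_1$, precisely because of the extra edge $d_2$). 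So what the hypothesis can deliver is $L_{\mathsf{k}}(E_{u,-})\cong L_{\mathsf{k}}(E_{u,--})$ --- swapping a single splice for a double splice --- not $L_{\mathsf{k}}(E)\cong L_{\mathsf{k}}(E_-)$ directly. Your pairing of $E^{(v)}$ with $(E_-)^{(v_-)}$ puts $\mathbf{F}_*$ nowhere in sight: for a general $E$ the Cohn graph $E^{(v)}$ contains no copy of $\mathbf{F}_*$, and $L_{\mathsf{k}}(E^{(v)})$ is not a pushout of $L_{\mathsf{k}}(\mathbf{F}_*)$ with an ``exterior'' algebra in any sense you have justified; there are cross-relations between the gadget and the exterior, and no amalgamated-product presentation is available. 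The paper instead embeds $L_{\mathsf{k}}(E_{u,-})\oplus C_{\mathsf{k}}(\mathbf{E}_*,\{v_2\})$ unitally into $L_2$ and constructs two Cuntz--Krieger families whose generated subalgebras coincide.

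The ``boundary matching'' you defer is where essentially all the content lies, and it cannot be handled by gauge automorphisms or a generic inner correction: one must prove that the specific idempotents $v_1$, $e_1e_1^*+e_2e_2^*$, $v_1-(e_1e_1^*+e_2e_2^*)$ and $v_2$ are Murray--von Neumann equivalent to their counterparts coming from the $\mathbf{E}_{**}$-presentation of the same algebra. In the paper this is Lemma~\ref{lem:csmurray-new}: it requires computing $K_0$ of the Cohn algebra and of its unique nontrivial ideal $\mathsf{M}_\infty(\mathsf{k})$, checking fullness of the relevant idempotents, and invoking separativity (\cite[Corollary~6.5]{MR2310414} together with \cite[Lemma~1.2]{AGOP-sep}) to upgrade equality in $K_0$ to Murray--von Neumann equivalence, after which Lemma~\ref{lem:invertible} produces the conjugating invertible element. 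None of this is present in your proposal. Finally, a smaller but genuine error: Cuntz splicing does \emph{not} preserve the \emph{pointed} $K_0$-group --- it preserves $K_0$ but may move the class of the unit --- so ``replace $E$ by $E_-$ and apply the corollary'' does not close the argument; one must pass through Morita equivalence via \cite[Theorem~1.25]{ALPS11} and then recover an isomorphism with \cite[Theorem~2.5]{ALPS11}, as the paper does.
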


Theorem~\ref{thm-main} gives a sufficient condition for a positive answer to the Algebraic Kirchberg-Phillips Question.  Since the Leavitt path algebras $L_{\mathsf{k}}(\mathbf{F}_{*})$ and $L_{\mathsf{k}}(\mathbf{F}_{**})$  are non-simple, it is unknown whether or not the condition is necessary.

The Leavitt path algebras $L_{\mathsf{k}}(\mathbf{F}_*)$ and $L_{\mathsf{k}}( \mathbf{F}_{**})$ fit into the following short exact sequences of algebras 
\begin{center}
 $0 \to  \mathsf{M}_\infty(\mathsf{k}) \to L_{\mathsf{k}}(\mathbf{F}_*) \to L_2 \to 0 $ \\
 \medskip
 $0 \to \mathsf{M}_\infty(\mathsf{k}) \to L_{\mathsf{k}}(\mathbf{F}_{**}) \to L_{2_-} \to 0$
\end{center}
with $\mathsf{M}_\infty(\mathsf{k})$ the unique non-trivial ideal.  Thus, if an isomorphism between $L_2$ and $L_{2_-}$ lifts to an isomorphism between $L_{\mathsf{k}}(\mathbf{F}_*)$ and $L_{\mathsf{k}}(\mathbf{F}_{**})$, then the following questions are equivalent 
\begin{enumerate}
    \item The Algebraic Kirchberg-Phillips Question;
    \item The $L_2$--$L_{2_-}$-Question; and 
    \item The Cohn $L_2$--$L_{2_-}$-Question.
\end{enumerate}  

\begin{question*}[The $L_2$--$L_{2_-}$-Extension Question]\label{quet-necessary}
Does an isomorphism between $L_2$ and $L_{2_-}$ imply the existence of an isomorphism between $L_{\mathsf{k}}(\mathbf{F}_*)$ and $L_{\mathsf{k}}( \mathbf{F}_{**})$?
\end{question*}

To prove Theorem~\ref{thm-main}, we will need two graph constructions: the Cuntz-splice and the double Cuntz-splice.  These graph constructions are our main tools in order to reduce the proof of Theorem~\ref{thm-main} to a situation for which signs of determinants are the same, which then allows us to use \cite[Corollary~2.7]{ALPS11}.

\begin{definition}[The Cuntz-Splice]\label{def-cuntzsplice}
Let $E = (E^0 , E^1 , r , s )$ be a graph and let $u \in E^0$ be a regular vertex that supports at least two return paths.  Then $E_{u, -}$ is the graph defined as follows:
\begin{align*}
E_{u,-}^{0} &= E^{0} \sqcup \mathbf{E}_{*}^{0} \\
E_{u,-}^{1} &= E^{1} \sqcup \mathbf{E}_{*}^{1} \sqcup \{ d_1, d_2 \}
\end{align*}
with $r_{E_{u,-}} \vert_{E^{1}} = r_{E}$, $s_{E_{u,-}} \vert_{ E^{1} } = s_{E}$, $r_{E_{u,-}} \vert_{\mathbf{E}_{*}^{1}} = r_{\mathbf{E}_{*}}$, $s_{E_{u,-}} \vert_{\mathbf{E}_{*}^{1}} = s_{\mathbf{E}_{*}}$, and
\begin{align*}
	s_{E_{u,-}}(d_1) &= u	& r_{E_{u,-}}(d_1) &= v_{1} \\
	s_{E_{u,-}}(d_2) &= v_1	& r_{E_{u,-}}(d_2) &= u.
\end{align*}
We call ${E_{u,-}}$ the \emph{graph obtained by Cuntz splicing $E$ at $u$}.
\end{definition}

By \cite[Theorem~2.12]{ALPS11}, the Cuntz-splice is a graph construction that does not change the $K_0$-groups of the associated Leavitt path algebras (though it may change the position of the unit) but does change the sign of the determinant, that is, 
$$
K_0( L_{\mathsf{k}}(E)) \cong K_0( L_{\mathsf{k}}(E_{u,-})) \quad \text{and} \quad \det( \mathsf{I} - A_{E_{u,-}}^t) = -\det( \mathsf{I} - A_E^t).
$$
Note that the graph $\mathbf{F}_{**}$ is not isomorphic to the Cuntz-splice of $\mathbf{F}_*$ at $v_1$ as $\mathbf{F}_{**}$ has an edge from $w_3$ to the sink whereas the Cuntz-splice of $\mathbf{F}_*$ at $v_1$ does not.

\begin{definition}[The Double Cuntz-splice]
Let $E$ be a graph and let $u$ be a regular vertex that supports at least two return paths.  Then $E_{u,--}$ is the graph defined as follows:
\begin{align*}
E_{u,--}^{0} &= E^{0} \sqcup \mathbf{E}_{**}^{0} \\
E_{u,--}^{1} &= E^{1} \sqcup \mathbf{E}_{**}^{1} \sqcup \{ d_1, d_2 \}
\end{align*}
with $r_{E_{u,--}} \vert_{E^{1}} = r_{E}$, $s_{E_{u,--}} \vert_{ E^{1} } = s_{E}$, $r_{E_{u,--}} \vert_{\mathbf{E}_{**}^{1}} = r_{\mathbf{E}_{**}}$, $s_{E_{u,--}} \vert_{\mathbf{E}_{**}^{1}} = s_{\mathbf{E}_{**}}$, and
\begin{align*}
	s_{E_{u,--}}(d_1) &= u		& r_{E_{u,--}}(d_1) &= w_{1} \\
	s_{E_{u,--}}(d_2) &= w_1	& r_{E_{u,--}}(d_2) &= u.
\end{align*}
\end{definition}
Note that the graph $E_{u,--}$ is isomorphic to the Cuntz-splice of $E_{u,-}$ at $v_1$.  Consequently, by \cite[Theorem~2.12]{ALPS11},
$$
K_0( L_{\mathsf{k}}( E_{u,--} )) \cong K_0 ( L_{\mathsf{k}}( E )) \quad \text{and} \quad \det( \mathsf{I} - A_{E_{u,--}}^t) = \det( \mathsf{I} - A_{E}^t).
$$

\begin{example}\label{example:cuntz-splice-new}
Consider the graph 
\begin{align*}
E \  = \ \ \ \ \xymatrix{
  \bullet_{u} \ar@(ul,dl) \ar@(dr,ur) \ar@(ld,rd)
}
\end{align*}
\\

\noindent Then 
\begin{align*}
E_{u,--} \  = \ \ \ \ \xymatrix{
	\bullet^{ w_{4} } \ar@(ul,ur)[]^{f_{10}}  \ar@/^/[r]^{ f_{9} } &
	\bullet^{ w_{3} } \ar@(ul,ur)[]^{f_{7}} \ar@/^/[r]^{ f_{6} }  \ar@/^/[l]_{f_{8}} &
	\bullet^{w_1} \ar@/^/[d]^{d_2} \ar@(ul,ur)[]^{f_{1}} \ar@/^/[r]^{f_{2}} \ar@/^/[l]_{f_{5}} &
	\bullet^{w_2} \ar@(ul,ur)[]^{f_{4}} \ar@/^/[l]_{f_{3}} \\
	& & \bullet_{u} \ar@/^/[u]^{d_1} \ar@(ul,dl) \ar@(dr,ur)  \ar@(ld,rd)& \\
 & & &
}
\end{align*}
\end{example}

The proof of  Theorem~\ref{thm-main} proceeds as follows.  Assume  $L_{\mathsf{k}}(\mathbf{F}_{*}) \cong L_{\mathsf{k}}(\mathbf{F}_{**})$ and  assume $E$ and $F$ are finite SPI graphs with
$$
( K_0(L_{\mathsf{k}}(E)) , [1_{L_{\mathsf{k}}(E)}] ) \cong (K_0(L_{\mathsf{k}}(F)),  [1_{L_{\mathsf{k}}(F)}] ).
$$  
The crucial idea is to prove that $L_{\mathsf{k}}(E)$ and $L_{\mathsf{k}}(F)$ are Morita equivalent as \cite[Theorem~2.5]{ALPS11} allows us to conclude from this that $L_{\mathsf{k}}(E) \cong L_{\mathsf{k}}(F)$.  The proof that $L_{\mathsf{k}}(E)$ and $L_{\mathsf{k}}(F)$ are Morita equivalent requires two main steps.

\begin{enumerate}
    \item Use \cite[Theorem~1.25]{ALPS11} to prove the Leavitt path algebras $L_{\mathsf{k}}(E)$ and $L_{\mathsf{k}}(E_{u,--})$ are Morita equivalent and if $\det( \mathsf{I} - A_{E}^t) \neq \det( \mathsf{I} - A_{F}^t)$, then the Leavitt path algebras $L_{\mathsf{k}}(F)$ and $L_{\mathsf{k}}(E_{u,-})$ are Morita equivalent.

    \item Use any isomorphism between $L_{\mathsf{k}}(\mathbf{F}_{*})$ and $L_{\mathsf{k}}(\mathbf{F}_{**})$ to prove that for any vertex $u$, the Leavitt path algebras $L_\mathsf{k}( E_{u,-} )$ and $L_\mathsf{k}( E_{u,--} )$ are isomorphic.
\end{enumerate}

\section{Proof of the main result}\label{sec-proof-main}

Recall that two idempotents $e$ and $f$ in a ring $R$ are said to be \emph{Murray-von Neumann equivalent}, denoted by $e \sim f$, provided that there are $u, v \in R$ such that 
$$
e = uv \quad \text{and} \quad vu = f.
$$
Our first result shows that we can choose $u$ and $v$ that implements a Murray-von Neumann equivalence between two idempotents $e$ and $f$ such that $u \in eAf$ and $v \in fAe$.  This is well-known and appears as Exercise~2 in Chapter 7 of \cite{AF-book} but we have included the proof here for the convenience of the reader. 

\begin{lemma}\label{lem:murray-vNm-idempotents}
Let $A$ be a ring and let $e$ and $f$ be idempotents of $A$.  If $e \sim f$, then there are $x, y \in A$ such that 
\begin{align*}
xy&=e & x &=ex=xf  \\
 yx&=f & y&=fy=ye
\end{align*}
Consequently, $xyx=x$ and $yxy=y$.
\end{lemma}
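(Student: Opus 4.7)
The plan is to massage the given implementers of the Murray--von Neumann equivalence so that they sit in the appropriate ``cornered'' subsets $eAf$ and $fAe$. Specifically, start with elements $u, v \in A$ satisfying $uv = e$ and $vu = f$, and define
\[
x := uf, \qquad y := fv.
\]
The motivation is that for any idempotents $e, f$ an element of $eAf$ is characterized by being fixed on the left by $e$ and on the right by $f$, so multiplying $u$ on the right by $f$ (the right identity for $fAe$-style objects coming from $v$) should automatically inject $e$ on the left ``for free'' via the relations.

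The first and crucial observation is the pair of commutation identities
\[
uf = u(vu) = (uv)u = eu, \qquad fv = (vu)v = v(uv) = ve,
\]
which follow immediately from $uv = e$ and $vu = f$. These identities are essentially the entire content of the proof, and they immediately give $x = uf = eu$ and $y = fv = ve$. From them one reads off the side conditions: $ex = e(uf) = (eu)f = (uf)f = uf^{2} = uf = x$, and $xf = (uf)f = uf = x$, so $x \in eAf$; symmetrically $fy = y$ and $ye = y$, so $y \in fAe$.

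Next I would verify $xy = e$ and $yx = f$ by direct computation, using the same identities together with $e^2 = e$ and $f^2 = f$:
\[
xy = (uf)(fv) = uf^{2}v = u(fv) = u(ve) = (uv)e = e\cdot e = e,
\]
and
\[
yx = (fv)(uf) = f(vu)f = f\cdot f\cdot f = f.
\]
Finally, the consequences $xyx = x$ and $yxy = y$ drop out at once, since $xyx = e x = x$ and $yxy = f y = y$ by the side conditions just established.

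There is no real obstacle here; the only ``idea'' in the proof is spotting the short computation $uf = eu$ (and the symmetric one for $v$), after which every claimed identity is a one-line manipulation. I would present the commutation identities first as a standalone observation, then display all the verifications as a short block of aligned equations.
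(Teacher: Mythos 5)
Your proof is correct and takes essentially the same approach as the paper: the paper sets $x=evf$ and $y=fwe$ (in its notation $e=vw$, $wv=f$), and your elements $x=uf=eu$ and $y=fv=ve$ coincide with these, since $euf=uf$ and $fve=fv$ by your commutation identities. The only difference is presentational — you isolate the identities $uf=eu$ and $fv=ve$ up front, which makes the verifications slightly cleaner.
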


\begin{proof}
Assume that $e\sim f$.  Then there are $v, w \in A$ such that $e=vw$ and $wv=f$.  Set $x= evf$ and $y=fwe$.  Then 
\begin{align*}
xy &= evfwe=e vwvwe=e^4=e \quad \text{and} \\
yx &= fwevf=fwvwvf = f^4=f. 
\end{align*}
A computation shows that $x = ex = xf$ and $y=fy=ye$.  It is now clear that $xyx=x$ and $yxy=y$.
\end{proof}

The next lemma is also well-known but since the proof is short  we have included the proof here for the reader's convenience.

\begin{lemma}\label{lem:invertible}
Let $A$ be a unital ring.  Suppose that $e_1, \ldots, e_n$ and $f_1, \ldots , f_n$ are two collections of mutually orthogonal idempotents of $A$ such that $e_i \sim f_i$ for all $i$ and 
$$\sum_{ i = 1}^n e_i = \sum_{i=1}^n f_i = 1.$$
Then there exists an invertible element $a \in A$ such that 
$$a^{-1} e_i a = f_i$$
for all $i$.
\end{lemma}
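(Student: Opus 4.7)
The plan is to build the desired invertible $a$ as a simple linear combination of implementers produced by Lemma~\ref{lem:murray-vNm-idempotents}, exploiting the fact that those implementers are ``localized'' in the corners $e_i A f_i$ and $f_i A e_i$. The role of Lemma~\ref{lem:murray-vNm-idempotents} is precisely to make this assembly possible: a raw implementer of $e_i \sim f_i$ would not interact correctly with the other idempotents when summed.

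First I would apply Lemma~\ref{lem:murray-vNm-idempotents} to each pair $e_i \sim f_i$ to obtain elements $x_i, y_i \in A$ satisfying $x_i y_i = e_i$, $y_i x_i = f_i$, $x_i = e_i x_i = x_i f_i$, and $y_i = f_i y_i = y_i e_i$. Then I would set
\[
a = \sum_{i=1}^{n} x_i, \qquad b = \sum_{i=1}^{n} y_i,
\]
and verify that $b = a^{-1}$. For the product $ab$, the cross terms factor as $x_i y_j = (x_i f_i)(f_j y_j)$, which vanishes for $i \neq j$ by orthogonality of the $f_k$; the diagonal terms collapse to $\sum_i x_i y_i = \sum_i e_i = 1$. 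An entirely analogous computation, using $y_i x_j = (y_i e_i)(e_j x_j)$ and orthogonality of the $e_k$, gives $ba = \sum_i f_i = 1$.

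Third, I would check $a^{-1} e_i a = f_i$ by expanding
\[
b e_i a = \sum_{j,k} y_j e_i x_k,
\]
and observing that $y_j e_i = y_j e_j e_i = \delta_{ji}\, y_i$ while $e_i x_k = e_i e_k x_k = \delta_{ik}\, x_i$, so only the single term $y_i x_i = f_i$ survives.

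There is no substantive obstacle here: once Lemma~\ref{lem:murray-vNm-idempotents} is invoked to place each implementer in the proper two-sided corner, the rest is bookkeeping with the orthogonality relations $e_j e_k = \delta_{jk} e_j$ and $f_j f_k = \delta_{jk} f_j$. The only conceptual point worth flagging is that one must use the refined implementers of Lemma~\ref{lem:murray-vNm-idempotents} rather than arbitrary $v_i, w_i$ with $v_i w_i = e_i$ and $w_i v_i = f_i$, since without the one-sided identities $x_i = e_i x_i = x_i f_i$ the cross terms in $ab$ and in $b e_i a$ need not vanish.
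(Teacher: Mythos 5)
Your proposal is correct and follows essentially the same route as the paper: apply Lemma~\ref{lem:murray-vNm-idempotents} to get corner-localized implementers $x_i, y_i$, set $a = \sum_i x_i$ and $b = \sum_i y_i$, and use the orthogonality relations to kill the cross terms in $ab$, $ba$, and $b e_i a$. Your remark that the refined implementers (rather than arbitrary ones) are needed for the cross terms to vanish is exactly the point the paper's argument relies on.
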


\begin{proof}
By Lemma~\ref{lem:murray-vNm-idempotents}, there are $x_i, y_i \in A$ such that $x_iy_i=e_i$, $y_ix_i=f_i$, $x_i=e_ix_i=x_i f_i$, $y_i=f_i y_i=y_ie_i$.  Set $a = \sum_{i=1}^n x_i$ and $b= \sum_{ i = 1}^n y_i$.  Then 
\begin{align*}
ab &= \sum_{ i,j=1}^n x_i y_j = \sum_{ i , j =1}^n x_i f_i f_j y_j = \sum_{ i=1}^n x_i y_i = \sum_{ i = 1}^n e_i = 1 \\
ba &= \sum_{ i,j=1}^n y_i x_j = \sum_{ i , j =1}^n y_i e_i e_j x_j = \sum_{ i=1}^n y_i x_i = \sum_{ i = 1}^n f_i = 1 \\
b e_k a &= \sum_{ i,j=1}^n y_i e_k x_j =  \sum_{ i,j=1}^n y_i e_i e_k e_j x_j = y_k x_k = f_k \quad \text{for all } k.\qedhere
\end{align*}
\end{proof}

It will be convenient to identify the Leavitt path algebras $L_{\mathsf{k}}( \mathbf{F}_{*})$ and $L_{\mathsf{k}}(\mathbf{F}_{**})$ as relative Cohn path algebras since there are fewer generators and relations in a relative Cohn path algebra (see \cite[Section~1.5]{AAM-book} and \cite{AK-Cohn, AM-simplelie}).  For a graph $E$, the set of \emph{regular vertices}, that is, the vertices of $E$ that emits at least one edge but does not emit infinitely many edges, will be denoted by $E_{\mathrm{reg}}^0$.

\begin{definition}[The relative Cohn path algebra at $V$]
Let $E =(E^0, E^1, r, s)$ be a graph, let $V \subseteq E_{\mathrm{reg}}^0$, and let $\mathsf{k}$ be a field.  The \emph{relative Cohn path $\mathsf{k}$-algebra at $V$}, $C_\mathsf{k}(E, V)$, is the universal $\mathsf{k}$-algebra generated by $\{ v, e, e^* : v \in E^0, e \in E^1 \}$ subject to the relations
\begin{enumerate}
    \item $vw = \delta_{v,w} v$ for all $v, w \in E^0$,

    \item $s(e) e = e r(e)=e$ for all $e \in E^1$, 
    
    \item $r(e) e^* = e^* s(e)=e^*$ for all $e \in E^1$,

    \item $e^*f = \delta_{e,f} r(e)$ for all $e, f \in E^1$, and 

    \item $v = \sum_{ s(e)=v } ee^*$ for all $v \in V$.
\end{enumerate}
\end{definition}

\begin{theorem}[{\cite[Theorem~1.5.18]{AAM-book}}]\label{thm:relative-graph}
Let $E =(E^0, E^1, r, s)$ be a graph and let $V \subseteq E_{\mathrm{reg}}^0$.  Define $E(V)$ to be the graph with vertices $E^0 \cup \{ v' : v \in E_\mathrm{reg}^0 \setminus V \}$ and edges $E^1 \cup \{ e' : e \in E^1 , r(e) \in  E_\mathrm{reg}^0 \setminus V \}$, and $r$ and $s$ extended to $E(V)^1$ by defining $s(e') = s(e)$ and $r(e') = r(e)'$.  Then for any field $\mathsf{k}$, 
$$
C_{\mathsf{k}}(E,V) \cong L_{\mathsf{k}}(E(V))$$
as $\mathsf{k}$-algebras.  Moreover, the isomorphism sends
\begin{align*}
    v &\mapsto \begin{cases}
        v+v' &\text{ if } v \in E_\mathrm{reg}^0\setminus V \\
        v &\text{ otherwise }
    \end{cases} \\
    e &\mapsto \begin{cases}
        e + e' &\text{ if } r(e) \in E_\mathrm{reg}^0 \setminus V \\
        e &\text{ otherwise}
    \end{cases} \\
    e^* &\mapsto \begin{cases}
        e^* + (e')^* &\text{ if } r(e) \in E_\mathrm{reg}^0 \setminus V \\
        e^* &\text{ otherwise}
    \end{cases} 
\end{align*}
\end{theorem}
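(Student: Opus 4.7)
The plan is to exhibit mutually inverse $\mathsf{k}$-algebra homomorphisms
\[\rho \colon C_{\mathsf{k}}(E,V) \to L_{\mathsf{k}}(E(V)) \qquad \text{and} \qquad \pi \colon L_{\mathsf{k}}(E(V)) \to C_{\mathsf{k}}(E,V),\]
each constructed by invoking the universal property of the source algebra and checking its defining relations on the explicit formulas.

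For $\rho$ I would adopt the formulas displayed in the statement. Relations (1)--(4) of $C_{\mathsf{k}}(E,V)$ translate into direct calculations in $L_{\mathsf{k}}(E(V))$; the only point to watch is that the cross terms $e(e')^*$ and $e'e^*$ vanish, which is immediate because the vertices $r(e)$ and $r(e)'$ are orthogonal in $E(V)$. Relation (5) at $v \in V$ asks that $v = \sum_{s(e)=v}\rho(e)\rho(e^*)$; since every $v \in V$ remains a regular vertex of $E(V)$, CK2 in $L_{\mathsf{k}}(E(V))$ at $v$ together with the expansion $(e+e')(e^* + (e')^*) = ee^* + e'(e')^*$ gives exactly this identity.

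For $\pi$, the key observation is that for each $v \in E_{\mathrm{reg}}^0 \setminus V$ the element $p_v := v - \sum_{s(g)=v}gg^*$ is an idempotent of $C_{\mathsf{k}}(E,V)$ orthogonal to $\sum_{s(g)=v}gg^*$, so that inside $C_{\mathsf{k}}(E,V)$ the vertex $v$ is already split into the two complementary idempotents that should correspond to the vertices $v$ and $v'$ of $E(V)$. Adopting the convention $p_w := 0$ whenever $w \notin E_{\mathrm{reg}}^0 \setminus V$, I would define $\pi$ on the Leavitt generators by
\begin{align*}
\pi(v) &= v - p_v, & \pi(v') &= p_v, \\
\pi(e) &= e - e p_{r(e)}, & \pi(e') &= e p_{r(e)}, \\
\pi(e^*) &= e^* - p_{r(e)} e^*, & \pi((e')^*) &= p_{r(e)} e^*.
\end{align*}
Verifying the Leavitt path algebra relations on $E(V)$ then amounts to short algebraic checks using $p_v^2 = p_v$ and $p_v(v - p_v) = 0$; CK1 for the primed edges follows from $p_{r(e)} e^* e\, p_{r(e)} = p_{r(e)}$, and the CK2 relation at a regular vertex $v$ of $E(V)$ collapses, after a cancellation between $\pi(e)\pi(e^*)$ and $\pi(e')\pi((e')^*)$, to $\sum_{s(e)=v}ee^* = \pi(v)$ --- which for $v \in V$ is precisely Cohn relation (5) and for $v \in E_{\mathrm{reg}}^0 \setminus V$ is the definition of $p_v$.

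Finally, checking $\pi \circ \rho = \id$ and $\rho \circ \pi = \id$ is a direct computation on generators: $\pi\rho(v) = (v - p_v) + p_v = v$, $\pi\rho(e) = (e - ep_{r(e)}) + ep_{r(e)} = e$, and conversely $\rho\pi(v') = \rho(v) - \rho\bigl(\sum gg^*\bigr) = (v+v') - v = v'$ after applying CK2 inside $L_{\mathsf{k}}(E(V))$. The main obstacle I expect is discovering the correct formulas defining $\pi$: the decomposition $v = p_v + \sum gg^*$ and the parallel decomposition of each edge $e$ along $p_{r(e)}$ encode the entire content of the theorem, and once this decomposition is in hand the CK2 check for $\pi$ at regular vertices of $E_{\mathrm{reg}}^0 \setminus V$ is the most delicate step, since it relies on precisely the cancellation that would fail were the Cohn relation (5) imposed at those vertices.
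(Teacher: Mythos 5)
Your proposal is correct: both maps are forced by the universal properties, the key idempotents $p_v = v - \sum_{s(g)=v} gg^*$ do the work you describe, and the composite checks on generators go through. The paper gives no proof of this statement, citing it as \cite[Theorem~1.5.18]{AAM-book}, and your two-sided construction is essentially the standard argument found there, so there is nothing to flag.
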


\begin{definition}[The Cohn graph of $E$ at $V$]\label{def-cohn-graph}
    Let $E =(E^0, E^1, r, s)$ be a graph and let $V \subseteq E_\mathrm{reg}^0$.  We call the graph $E(V)$ in Theorem~\ref{thm:relative-graph} the \emph{Cohn graph of $E$ at $V$}.  
\end{definition}

Note that the Cohn graph of $\mathbf{E}_*$ at $\{v_2\}$ is
\begin{align*}
(\mathbf{E}_*)(\{v_2\}) \ = \ \ \ \ \xymatrix{
  \bullet^{v_1} \ar[d]_{e_{1}'} \ar@(ul,ur)[]^{e_{1}} \ar@/^/[r]^{e_{2}} & \bullet^{v_2} \ar@(ul,ur)[]^{e_{4}} \ar@/^/[l]_{e_{3}} \ar@/^/[dl]^{e_{3}'} \\
  \bullet_{v_1'} & 
}
\end{align*} 
and the Cohn graph of $\mathbf{E}_{**}$ at $\{w_2, w_3, w_4\}$ is
\begin{align*}
(\mathbf{E}_{**})(\{w_2,w_3,w_4\}) \ = \ \ \ \ \xymatrix{
	\bullet^{w_4} \ar@(ul,ur)[]^{f_{10}}  \ar@/^/[r]^{ f_{9} } &
	\bullet^{w_3} \ar@/_/[dr]_{f_6'} \ar@(ul,ur)[]^{f_{7}} \ar@/^/[r]^{ f_{6} }  \ar@/^/[l]_{f_{8}} &
	\bullet^{w_1} \ar[d]_{f_1'} \ar@(ul,ur)[]^{f_{1}} \ar@/^/[r]^{f_{2}} \ar@/^/[l]_{f_{5}} &
	\bullet^{w_2} \ar@/^/[dl]^{f_3'} \ar@(ul,ur)[]^{f_{4}} \ar@/^/[l]_{f_{3}} \\
	& & \bullet_{w_1'} & 
}
\end{align*}
which are the graphs $\mathbf{F}_*$ and $\mathbf{F}_{**}$ of Definition~\ref{def-fstar}.  Therefore, the assumption in Theorem~\ref{thm-main} is equivalent to an isomorphism between the relative Cohn path algebras $C_{\mathsf{k}}(\mathbf{E}_{*} , \{v_2\})$ and $C_{\mathsf{k}}(\mathbf{E}_{**}, \{w_2, w_3, w_4\} )$.

The following is the key lemma to build an isomorphism between $L_{\mathsf{k}}(E_{u,-})$ and $L_{\mathsf{k}}(E_{u,--})$ under the assumption that $C_{\mathsf{k}}(\mathbf{E}_{*}, \{v_2\})$ and $C_{\mathsf{k}}(\mathbf{E}_{**}, \{ w_2,w_3,w_4 \})$ are isomorphic.  

\begin{lemma}[{\cite[Lemma~4.4]{ERRS-InvCS}.}] \label{lem:csmurray-new}
Assume the relative Cohn path algebras $C_{\mathsf{k}}(\mathbf{E}_{*}, \{v_2\})$ and $C_{\mathsf{k}}(\mathbf{E}_{**}, \{ w_2,w_3,w_4 \})$ are isomorphic.  Set 
$$
\mathcal{E} = C_{\mathsf{k}}(\mathbf{E}_{*}, \{v_2\})$$ 
and choose an isomorphism between $C_{\mathsf{k}}(\mathbf{E}_{**}, \{ w_2,w_3,w_4 \})$ and $\mathcal{E}$.  Let $p_{w_1}$, $p_{w_2}$, $p_{w_3}$, $p_{w_4}$, $s_{f_1}$, $s_{f_2}, \ldots, s_{f_{10}}$, $s_{f_1^*}$, $s_{f_2^*}, \ldots, s_{f_{10}^*}$ denote the image of the canonical generators of $C_{\mathsf{k}}(\mathbf{E}_{**}, \{ w_2,w_3,w_4 \})$ in $\mathcal{E}$ under the chosen isomorphism. 
Then 
\begin{align*}
	e_1e_1^* + e_2 e_2^* &\ \sim \ s_{f_1} s_{f_1^*} + s_{f_2} s_{f_2^*} +s_{f_5} s_{f_5^*}, \\
	v_1 - \left( e_1 e_1^* + e_2 e_2^* \right) &\ \sim \ p_{w_1} - \left( s_{f_1} s_{f_1^*} + s_{f_2} s_{f_2^*} +s_{f_5} s_{f_5^*} \right), \\
	1_\mathcal{E}-v_1 = v_2 &\ \sim \ p_{w_2}+p_{w_3}+p_{w_4}=1_\mathcal{E}-p_{w_1}
\end{align*}
in $\mathcal{E}$.  Thus there exists an invertible element $z_0$ of $\mathcal{E}$ such that
\begin{align*}
	z_0\left(e_1 e_1^* + e_2 e_2^*\right)z_0^{-1} &= s_{f_1} s_{f_1^*} + s_{f_2} s_{f_2^*} +s_{f_5} s_{f_5^*}, \\
	z_0\left(v_1 - \left( e_1 e_1^* + e_2 e_2^* \right)\right)z_0^{-1} &= p_{w_1} - \left( s_{f_1} s_{f_1^*} + s_{f_2} s_{f_2^*} +s_{f_5} s_{f_5^*} \right), \\
	z_0 v_1z_0^{-1} &= p_{w_1} \\	
	 z_0 v_2 z_0^{-1} &= p_{w_2}+p_{w_3}+p_{w_4}.
\end{align*}
\end{lemma}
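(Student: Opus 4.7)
The plan is to apply Lemma~\ref{lem:invertible} to the two collections
\[
(p_1, p_2, p_3) := (e_1 e_1^* + e_2 e_2^*,\ v_1 - e_1 e_1^* - e_2 e_2^*,\ v_2)
\]
and
\[
(q_1, q_2, q_3) := (s_{f_1} s_{f_1^*} + s_{f_2} s_{f_2^*} + s_{f_5} s_{f_5^*},\ p_{w_1} - q_1,\ p_{w_2}+p_{w_3}+p_{w_4}).
\]
A routine computation using the (relative) Cohn path relations verifies that each triple consists of mutually orthogonal idempotents summing to $1_\mathcal{E}$, using $v_1 + v_2 = 1_\mathcal{E} = p_{w_1} + p_{w_2} + p_{w_3} + p_{w_4}$. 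Once the three Murray--von Neumann equivalences $p_i \sim q_i$ are in hand, Lemma~\ref{lem:invertible} delivers the invertible $z_0$ with $z_0 p_i z_0^{-1} = q_i$ for $i \in \{1,2,3\}$; the last two conjugation identities of the statement then follow by summing $p_1 + p_2 = v_1$ with $q_1 + q_2 = p_{w_1}$, and $p_3 = v_2$ with $q_3 = p_{w_2}+p_{w_3}+p_{w_4}$.

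For the ``inside the ideal'' equivalence $p_2 \sim q_2$, I would identify $\mathcal{E}$ with $L_\mathsf{k}(\mathbf{F}_*)$ via Theorem~\ref{thm:relative-graph}, under which $p_2$ becomes $e_1'(e_1')^* \sim (e_1')^* e_1' = v_1'$. The same procedure, combined with the hypothesized isomorphism and Theorem~\ref{thm:relative-graph} applied to $\mathbf{E}_{**}$, shows $q_2 \sim w_1'$ under the identification of $\mathcal{E}$ with $L_\mathsf{k}(\mathbf{F}_{**})$. Both $v_1'$ and $w_1'$ sit as rank-one idempotents of the unique non-trivial ideal $I \cong \mathsf{M}_\infty(\mathsf{k})$; since any two rank-one idempotents of $\mathsf{M}_\infty(\mathsf{k})$ are Murray--von Neumann equivalent, we conclude $p_2 \sim q_2$.

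The main obstacle is the two ``outside the ideal'' equivalences $p_1 \sim q_1$ and $p_3 \sim q_3$. Both presentations of $\mathcal{E}$ identify the quotient $\mathcal{E}/I$ with a simple, purely infinite, unital Leavitt path algebra having trivial $K_0$; the two identifications are $L_2$ and $L_{2_-}$ respectively, so the hypothesis in particular furnishes an isomorphism $L_2 \cong L_{2_-}$. In such a quotient any two non-zero idempotents are Murray--von Neumann equivalent, which yields $\overline{p_1} \sim \overline{q_1}$ and $\overline{p_3} \sim \overline{q_3}$ in $\mathcal{E}/I$. The delicate part is lifting these equivalences back to $\mathcal{E}$: choosing arbitrary lifts of the implementing elements produces error terms lying in $I$, and these must be absorbed using the unbounded matrix rank of $I \cong \mathsf{M}_\infty(\mathsf{k})$ together with the rank-one equivalence $p_2 \sim q_2$ already established inside $I$. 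This absorption is the algebraic counterpart of the Murray--von Neumann manipulation carried out in the proof of \cite[Lemma~4.4]{ERRS-InvCS} and constitutes the most substantial step of the argument; it is here that the purely infinite simple behavior of $\mathcal{E}/I$ together with the ample room inside the infinite-dimensional matrix ideal is essential.
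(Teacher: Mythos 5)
Your reduction to Lemma~\ref{lem:invertible}, the orthogonality/summation check for the two triples, and the argument for $p_2\sim q_2$ (both gap idempotents correspond to the sink vertices $v_1'$ and $w_1'$, i.e.\ to minimal idempotents of the unique nontrivial ideal $I\cong\mathsf{M}_\infty(\mathsf{k})$) all agree with the paper. The problem is that for $p_1\sim q_1$ and $p_3\sim q_3$ you only prove equivalence in the quotient $\mathcal{E}/I$ and then defer the lift back to $\mathcal{E}$ to an unspecified ``absorption,'' which you yourself identify as the most substantial step. That step is missing, and as described it cannot succeed: since $K_0(\mathcal{E}/I)=K_0(L_2)=0$, the exact sequence $K_0(I)\to K_0(\mathcal{E})\to K_0(\mathcal{E}/I)$ shows that $K_0(I)\to K_0(\mathcal{E})$ is \emph{onto} the nonzero group $K_0(\mathcal{E})\cong\Z$, so equivalence modulo $I$ carries no information about the classes $[p_i],[q_i]\in K_0(\mathcal{E})$ --- and equality of those classes is a necessary condition for $p_i\sim q_i$ that no amount of room in $\mathsf{M}_\infty(\mathsf{k})$ can manufacture. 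You must actually verify $[p_i]=[q_i]$ in $K_0(\mathcal{E})$, which is the graph-specific computation at the heart of the paper's proof: from the vertex relations of $\mathbf{F}_{**}$ one derives $[q_{w_3}]=[q_{w_4}]=0$ and $[q_{w_1}]=[q_{w_2}]=[1_\mathcal{E}]=[q_{v_1}]=[q_{v_2}]$.

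Even with the $K_0(\mathcal{E})$ classes matched you still need a device converting ``same class'' into ``Murray--von Neumann equivalent,'' and the paper's choice avoids the quotient-and-lift issue altogether: $\mathcal{E}$ is a separative ring by \cite[Corollary~6.5]{MR2310414} together with Theorem~\ref{thm:relative-graph}, and \cite[Lemma~1.2]{AGOP-sep} then gives that two idempotents of $\mathcal{E}$ generating the same two-sided ideal $J$ and having the same class in $K_0(J)$ are equivalent. Applying this with $J=\mathcal{E}$ (the idempotents $p_1,q_1,p_3,q_3$ are all full, because the hereditary saturated closure of $v_1$, respectively $w_1$, is the whole vertex set) and with $J=I$ for $p_2,q_2$ handles all three equivalences uniformly. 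I suggest replacing your final paragraph with the fullness check, the $K_0(\mathcal{E})$ computation, and an appeal to this separativity criterion.
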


\begin{proof}
We first make the following observation of when two idempotents in $\mathcal{E}$ are Murray-von Neumann equivalent.  By \cite[Corollary~6.5]{MR2310414} and Theorem~\ref{thm:relative-graph}, $\mathcal{E}$ is a separative ring.  Suppose $p$ and $q$ are idempotents that generate the same two-sided ideal $I$ of $\mathcal{E}$ and $[p]=[q]$ in $K_0(I)$.  Then there are $m , n \in \N$ such that 
$$
p \oplus \left( \bigoplus_{k=1}^m q \right) \sim q \oplus \left( \bigoplus_{k=1}^m q \right), \ \bigoplus_{k=1}^m q \sim q' \leq \bigoplus_{k=1}^n q, \text{ and } \bigoplus_{k=1}^m q \sim q'' \leq \bigoplus_{k=1}^n p.  
$$
Thus, by \cite[Lemma~1.2]{AGOP-sep}, $p \sim q$.  Consequently, two idempotents in $\mathcal{E}$ are Murray-von~Neumann equivalent if and only if they generate the same ideal $I$ and have the same class in $K_0(I)$.  

By Theorem~\ref{thm:relative-graph}, we may realize the relative Cohn path algebras $C_{\mathsf{k}}(\mathbf{E}_{*}, \{v_2\})$ and $C_{\mathsf{k}}(\mathbf{E}_{**}, \{ w_2,w_3,w_4 \})$ as Leavitt path algebras of the graphs $(\mathbf{E}_*)(\{v_2\})$ and $(\mathbf{E}_{**})(\{w_2,w_3,w_4\})$ respectively.   Denote the image of the vertex idempotent of $L_{\mathsf{k}}((\mathbf{E}_*)(\{v_2\}))$ inside $\mathcal{E}$ under this isomorphism by $q_{v_1}, q_{v_2}, q_{v_1'}$ and denote the image of the vertex idempotents of $(\mathbf{E}_{**})(\{w_2,w_3,w_4\})$ inside $\mathcal{E}$ under the isomorphisms 
$$
L_{\mathsf{k}}((\mathbf{E}_{**})(\{w_2,w_3,w_4\}))\cong C_{\mathsf{k}}(\mathbf{E}_{**}, \{ w_2,w_3,w_4 \}) \cong\mathcal{E}
$$
by $q_{w_1}, q_{w_2}, q_{w_3}, q_{w_4}, q_{v_1'}$.  Using the description of the isomorphism in Theorem~\ref{thm:relative-graph}, we need to show that $q_{v_1} \sim q_{w_1}$, $q_{v_1'} \sim q_{w_1'}$ and $q_{v_2}\sim q_{w_2}+q_{w_3}+q_{w_4}$.

Since the smallest hereditary and saturated subset of $\mathbf{E}_*^0$ that contains $v_1$ is all of $(\mathbf{E}_*)(\{v_2\})^0$, we have that $q_{v_1}$ is a full idempotent in $\mathcal{E}$.  Similarly, $q_{w_1}$, $q_{v_2}$ and $q_{w_2}+q_{w_3}+q_{w_4}$ are full idempotents in $\mathcal{E}$. 
In $K_0(\mathcal{E})$, we have
\begin{align*}
[ q_{v_1} ] &= [ q_{v_1} ] + [ q_{v_2} ] + [ q_{v_1'} ] = [1_\mathcal{E} ]
& [ q_{v_2} ] &=  [ q_{v_1} ] + [ q_{v_2} ] + [ q_{v_1'} ] = [1_\mathcal{E} ]
\end{align*}
which imply that $[ q_{v_2} ] = -[ q_{v_1'}] =[ q_{v_1} ] $.  Therefore,
\[
[1_\mathcal{E} ] = [ q_{v_1} ] + [ q_{v_2} ] + [ q_{v_1'} ] = - [q_{v_1'} ] = [q_{v_1} ] = [q_{v_2}].
\]
In $K_0(\mathcal{E})$, we have
\begin{align*}
[ q_{w_1} ] &= [ q_{w_3} ] + [ q_{w_2} ] + [ q_{w_1} ] + [ q_{w_1'} ]  \\
[ q_{w_2} ] &= [ q_{w_2} ] + [ q_{w_1} ] + [ q_{w_1'} ]  \\
[ q_{w_3} ] &= [ q_{w_4} ] + [ q_{w_3} ] + [ q_{w_1} ] + [ q_{w_1'} ]  \\
[ q_{w_4} ] &= [ q_{w_4} ] + [ q_{w_3} ]
\end{align*}
which imply that $[ q_{w_3} ] = 0$ and $[q_{w_1} ] = - [ q_{w_1'} ]=[q_{w_2} ]$.  Hence,
\begin{align*}
[q_{w_4} ] &= - [ q_{w_1} ] - [q_{w_1'} ] = 0 \text{ and } \\
[ 1_{\mathcal{E}} ] &= [ q_{w_4} ] + [ q_{w_3} ] + [ q_{w_2} ] + [ q_{w_1} ] + [ q_{w_1'} ] = - [q_{w_1'}] = [q_{w_1} ] = [q_{w_2}] .
\end{align*}
Consequently,
\begin{align*}
	[q_{v_1}] &= [1_\mathcal{E}] = [q_{w_1}], \\
	[q_{v_2}] &= [1_\mathcal{E}] = [q_{w_2}]=[q_{w_2}]+[q_{w_3}]+[q_{w_4}].
\end{align*}
By the discussion in the first paragraph, $q_{v_1} \sim q_{w_1}$ and $q_{v_2}\sim q_{w_2}+q_{w_3}+q_{w_4}$.

Both $q_{v_1'}$ and $q_{w_1'}$ generate the only nontrivial ideal $I$ of $\mathcal{E}$.  Since the ideal is isomorphic to $\mathsf{M}_\infty(\mathsf{k})$ and both $[q_{v_1'}]$ and $[q_{w_1'}]$ are positive generators of  $K_0(I)\cong K_0(\mathsf{M}_\infty(\mathsf{k}))\cong \Z$, they must both represent the same class in $K_0(I)$. 
 Therefore, $q_{v_1'} \sim q_{w_1'}$.

The above paragraphs imply that \begin{align*}
	e_1e_1^* + e_2 e_2^* &\sim s_{f_1} s_{f_1^*} + s_{f_2} s_{f_2^*} +s_{f_5} s_{f_5^*}, \\
	v_1 - \left( e_1 e_1^* + e_2 e_2^* \right) &\sim p_{w_1} - \left( s_{f_1} s_{f_1^*} + s_{f_2} s_{f_2^*} +s_{f_5} s_{f_5^*} \right), \\
	1_\mathcal{E}-v_1 = v_2 &\sim p_{w_2}+p_{w_3}+p_{w_4}=1_\mathcal{E}-p_{w_1}.
\end{align*} 
The existence of $z_0$ now follows from Lemma~\ref{lem:invertible}.
\end{proof}

By Theorem~\ref{thm:relative-graph} and the remarks after the theorem, we have 
$$
L_{\mathsf{k}}(\mathbf{F}_{*}) \cong L_{\mathsf{k}}(\mathbf{F}_{**}).
$$
if and only if 
$$
C_{\mathsf{k}}(\mathbf{E}_{*}, \{v_2\}) \cong C_{\mathsf{k}}(\mathbf{E}_{**}, \{ w_2,w_3,w_4 \}).
$$
Thus, to prove Theorem~\ref{thm-main}, we may work with the above relative Cohn path algebras instead of the Leavitt path algebras $L_{\mathsf{k}}(\mathbf{F}_{*})$ and $L_{\mathsf{k}}(\mathbf{F}_{**})$.  

Our main tool in proving Theorem~\ref{thm-main} is the fact that the Leavitt path algebras over the Cuntz-splice and the double Cuntz-splice of a finite SPI graph are isomorphic when the relative Cohn path algebras 
\[
C_{\mathsf{k}}(\mathbf{E}_{*}, \{v_2\}) \quad \text{and} \quad C_{\mathsf{k}}(\mathbf{E}_{**}, \{ w_2,w_3,w_4 \})
\]
are isomorphic.  The proof of this fact follows the line of reasoning as the proof of \cite[Theorem~4.5]{ERRS-InvCS}.

\begin{theorem}\label{t:cuntz-splice-1}
Let $E$ be a finite SPI graph and let $u$ be a vertex of $E$ that supports at least two return paths.  If 
$$
C_{\mathsf{k}}(\mathbf{E}_{*}, \{v_2\}) \cong C_{\mathsf{k}}(\mathbf{E}_{**}, \{ w_2,w_3,w_4 \}),
$$ 
then $L_{\mathsf{k}}(E_{u,-}) \cong L_{\mathsf{k}}(E_{u,--})$.
\end{theorem}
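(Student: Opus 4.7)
The plan is to construct an explicit ring isomorphism $\Psi \colon L_{\mathsf{k}}(E_{u,-}) \to L_{\mathsf{k}}(E_{u,--})$ by exhibiting, via the universal property of Leavitt path algebras, a Cuntz--Krieger $E_{u,-}$-family inside $L_{\mathsf{k}}(E_{u,--})$ that is twisted by the invertible element provided by Lemma~\ref{lem:csmurray-new}. First I identify $C_{\mathsf{k}}(\mathbf{E}_{**},\{w_2,w_3,w_4\})$ with the subalgebra $B \subseteq L_{\mathsf{k}}(E_{u,--})$ generated by the $\mathbf{E}_{**}$-generators $w_i, f_j, f_j^*$: the CK identity at $w_1$ in $L_{\mathsf{k}}(E_{u,--})$ reads $w_1 = f_1 f_1^* + f_2 f_2^* + f_5 f_5^* + d_2 d_2^*$, so the Cohn relations (which do not impose the full CK equality at $w_1$) are automatically satisfied in $B$; injectivity of the natural representation then follows from a standard uniqueness argument using $d_2 d_2^* \neq 0$. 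Composing with the hypothesized $\Phi\colon C_{\mathsf{k}}(\mathbf{E}_*,\{v_2\}) \cong C_{\mathsf{k}}(\mathbf{E}_{**},\{w_2,w_3,w_4\})$ transports the $\mathbf{E}_*$-data into $B$ as $\widetilde{p}_{v_i} := \Phi(v_i)$ and $\widetilde{s}_{e_j} := \Phi(e_j)$.

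Applying Lemma~\ref{lem:csmurray-new} and transporting through $\Phi$ yields an invertible $z \in B$ with $z w_1 z^{-1} = \widetilde{p}_{v_1}$, $z(w_2+w_3+w_4)z^{-1}=\widetilde{p}_{v_2}$, and $z(f_1 f_1^* + f_2 f_2^* + f_5 f_5^*) z^{-1} = \widetilde{s}_{e_1}\widetilde{s}_{e_1^*}+\widetilde{s}_{e_2}\widetilde{s}_{e_2^*}$. I define the Cuntz--Krieger $E_{u,-}$-family in $L_{\mathsf{k}}(E_{u,--})$ by the identity on all $E$-generators, $v_i \mapsto \widetilde{p}_{v_i}$ and $e_j \mapsto \widetilde{s}_{e_j}$ (with stars) on the $\mathbf{E}_*$-generators, and the twisted assignments $d_1 \mapsto d_1 z^{-1}$, $d_1^* \mapsto z d_1^*$, $d_2 \mapsto z d_2$, $d_2^* \mapsto d_2^* z^{-1}$ on the connector edges. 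The twists are tailor-made: $S_{d_1^*}S_{d_1} = z w_1 z^{-1} = \widetilde{p}_{v_1}$ matches $r(d_1)=v_1$, $S_{d_1}S_{d_1^*}=d_1 d_1^*$ provides the extra summand for CK at $u$, and $S_{d_2}S_{d_2^*} = z d_2 d_2^* z^{-1} = \widetilde{p}_{v_1} - (\widetilde{s}_{e_1}\widetilde{s}_{e_1^*}+\widetilde{s}_{e_2}\widetilde{s}_{e_2^*})$ supplies the missing summand for CK at $v_1$.

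The step I expect to be the most technical is verifying orthogonality between the twisted connector images and the other edge generators, most notably $S_{d_2^*} S_{e_j} = d_2^* z^{-1} \widetilde{s}_{e_j}$ for $j=1,2$. The crucial identity is that the idempotent $\widetilde{q} := \widetilde{p}_{v_1}-(\widetilde{s}_{e_1}\widetilde{s}_{e_1^*}+\widetilde{s}_{e_2}\widetilde{s}_{e_2^*}) = z d_2 d_2^* z^{-1}$ satisfies $\widetilde{q}\widetilde{s}_{e_j}=\widetilde{s}_{e_j^*}\widetilde{q}=0$ for $j=1,2$ by a direct Cohn-algebra computation; multiplying $z d_2 d_2^*z^{-1}\widetilde{s}_{e_j}=0$ on the left by $d_2^* z^{-1}$ and using $d_2^* d_2 = u$ then yields $d_2^*z^{-1}\widetilde{s}_{e_j}=0$. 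The remaining orthogonalities (for $j=3,4$ via $d_2^*(w_2+w_3+w_4)=0$, and among $d_1, d_2$ and the $E$-edges) follow from the Leavitt relation $e^*f = \delta_{e,f}r(e)$ combined with the fact that $B$ is supported on $\{w_1,\ldots,w_4\}$ while $d_1^*, d_2^*$ absorb $u$ on the appropriate side. Universality of $L_{\mathsf{k}}(E_{u,-})$ then produces the homomorphism $\Psi$, and the entirely symmetric construction inside $L_{\mathsf{k}}(E_{u,-})$ (using $\Phi^{-1}$ and an analogous invertible $z'$ in the subalgebra $A \subseteq L_{\mathsf{k}}(E_{u,-})$ playing the role of $B$) furnishes its two-sided inverse, so $\Psi$ is an isomorphism.
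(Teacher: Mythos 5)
Your route is genuinely different from the paper's. The paper does not work inside $L_{\mathsf{k}}(E_{u,--})$ at all: it embeds $L_{\mathsf{k}}(E_{u,-})\oplus\mathcal{E}$ unitally into $L_2$ (via \cite[Theorem~4.1]{embedd-l2}), uses the fact that \emph{any} two nonzero idempotents of $L_2$ are Murray--von Neumann equivalent to manufacture the connecting partial isometries for $d_1,d_2$, builds two Cuntz--Krieger families $\mathcal{S}$ (for $E_{u,-}$) and $\mathcal{T}$ (for $E_{u,--}$) inside $L_2$ related by the invertible element of Lemma~\ref{lem:csmurray-new}, and concludes by showing $A_{\mathcal{S}}=A_{\mathcal{T}}$. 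Your approach instead realizes $C_{\mathsf{k}}(\mathbf{E}_{**},\{w_2,w_3,w_4\})$ as the subalgebra $B$ of $L_{\mathsf{k}}(E_{u,--})$ generated by the $\mathbf{E}_{**}$-data (legitimate: the Cohn relations hold because the CK relation at $w_1$ in $E_{u,--}$ has the extra summand $d_2d_2^*$, and injectivity follows from the graded uniqueness theorem since $w_1'$ maps to $d_2d_2^*\neq 0$), and then twists the connector edges by the invertible $z\in B$. This buys you independence from the $L_2$-embedding theorem; the price is that you must track that $z$ is invertible only in the corner $B$ (with $zz^{-1}=w_1+\cdots+w_4$, not the global unit), which your orthogonality computations do handle correctly. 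The relation checks you flag as delicate do go through.

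The one genuine gap is the last sentence: the claim that the ``entirely symmetric construction'' furnishes a two-sided inverse of $\Psi$ is not justified and is unlikely to be literally true. The symmetric construction produces \emph{some} homomorphism $\Psi'\colon L_{\mathsf{k}}(E_{u,--})\to L_{\mathsf{k}}(E_{u,-})$, but there is no reason the composites $\Psi'\circ\Psi$ and $\Psi\circ\Psi'$ should be the identity rather than nontrivial endomorphisms; verifying that would require an exact compatibility between $\Phi$, $\Phi^{-1}$, $z$, and $z'$ that you have not arranged. Moreover, knowing both $\Psi$ and $\Psi'$ are injective (which follows from simplicity of $L_{\mathsf{k}}(E_{u,\pm})$, since $E$ is SPI) does not give surjectivity of either. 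The repair is short and is essentially the closing argument of the paper's proof: the image of $\Psi$ contains $\widetilde{p}_{v_i},\widetilde{s}_{e_j},\widetilde{s}_{e_j^*}$, hence all of $B=\Phi\bigl(C_{\mathsf{k}}(\mathbf{E}_*,\{v_2\})\bigr)$, hence $z$ and $z^{-1}$; therefore it contains $d_1=S_{d_1}z$, $d_1^*=z^{-1}S_{d_1^*}$, $d_2=z^{-1}S_{d_2}$, $d_2^*=S_{d_2^*}z$, as well as all $v\in E^0$, $e\in E^1$, and the $w_i,f_j,f_j^*$. So $\Psi$ is surjective, and since $L_{\mathsf{k}}(E_{u,-})$ is simple and $\Psi\neq 0$, it is an isomorphism. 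With that substitution your argument is complete.
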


\begin{proof}
As in Lemma~\ref{lem:csmurray-new}, $\mathcal{E}$ will be the relative Cohn path algebra $C_{\mathsf{k}}(\mathbf{E}_{*}, \{v_2\})$, and we choose an isomorphism between $\mathcal{E}$ and $C_{\mathsf{k}}(\mathbf{E}_{**}, \{ w_2,w_3,w_4 \})$, which exists by assumption.  Since $L_{\mathsf{k}}(E_{u,-}) \oplus \mathcal{E} $ is isomorphic to the Leavitt algebra of the finite graph $E_{u,-} \sqcup \mathbf{F}_*$, by \cite[Theorem~4.1]{embedd-l2}, there exists a unital, injective homomorphism
\[
	L_{\mathsf{k}}(E_{u,-}) \oplus \mathcal{E} \hookrightarrow L_2.
\]
We will suppress this embedding in our notation. 

In $L_2$, we denote the canonical generators of $L_{\mathsf{k}}(E_{u,-})$ by $p_v, v\in E_{u,-}^0$ and $s_e, s_{e^*},e\in E_{u,-}^1$, respectively, and we denote the canonical generators of $\mathcal{E}=C_{\mathsf{k}}(\mathbf{E}_*, \{v_2\})$ by $p_1,p_2$ and $s_1, s_2, s_3, s_4, s_1^*, s_2^*, s_3^*, s_4^*$, respectively.  Note that 
$$
\{ p_v : v \in E_{u,-}\} \cup \{ s_e, s_{e^*} : e \in E_{u,-}\}
$$
is a Cuntz-Krieger $E_{u,-}$-family in $L_2$.

We will define a second Cuntz-Krieger $E_{u,-}$-family in $L_2$. 
We let
\begin{align*}
q_v&=p_v&&\text{for each }v\in E^0, \\
q_{v_1}&=p_1, \\
q_{v_2}&=p_2.
\end{align*}
Since any two nonzero idempotents in $L_2$ are Murray-von~Neumann equivalent, by Lemma~\ref{lem:murray-vNm-idempotents}, we can choose $x_1, y_1, x_2, y_2 \in L_2$ such that 
\begin{align*}
	x_1 y_1 &= s_{d_1} s_{d_1^*}			& x_1 &= s_{d_1}s_{d_1^*} x_1 = x_1 p_1	 \\
	 y_1 x_1 &= p_1    & y_1 &= p_1 y_1 = y_1 s_{d_1} s_{d_1^*} \\
	x_2 y_2 &= p_1 - (s_1 s_1^* + s_2 s_2^*) & x_2 &= (p_1 - (s_1 s_1^* + s_2 s_2^*)) x_2= x_2 p_u \\
	 y_2 x_2 &= p_u & y_2 &= p_u y_2 = y_2 ( p_1 - (s_1 s_1^* + s_2 s_2^*) ).
\end{align*} 
We let 
\begin{align*}
t_e&=s_e&&\text{for each }e\in E^1, \\
t_{e^*}&=s_{e^*},&& \\
t_{e_i}&=s_i&&\text{for each }i=1,2,3,4, \\
t_{e_i^*}&=s_i^*,&& \\
t_{d_1}&=x_1, \\
t_{d_1^*}&=y_1, \\
t_{d_2}&=x_2, \\
t_{d_2^*}&=y_2. 
\end{align*}

By the choice of $\setof{ t_e, t_{e^*} }{ e \neq d_1,d_2 }$ the Cuntz-Krieger relations are clearly satisfied at all vertices other than $v_1$ and $u$. 
Additionally, the choices of $x_1, x_2$ ensure that the relations hold at $u$ and $v_1$ as well. 
Hence $\{q_v, t_e, t_{e^*} \}$ does indeed form a Cuntz-Krieger $E_{u,-}$-family in $L_2$.  Denote this family by $\mathcal{S}$.  Using the universal property of Leavitt path algebras, we get a homomorphism from $L_{\mathsf{k}}(E_{u,-})$ onto $A_\mathcal{S} \subseteq L_2$, where $A_\mathcal{S}$ is the subalgebra generated by $\mathcal{S}$.   Since $E$ is a SPI graph, $E_{u, -}$ is a SPI graph, and hence $L_{\mathsf{k}}(E_{u,-})$ is a simple ring.  Consequently, the homomorphism from $L_{\mathsf{k}}(E_{u,-})$ to $A_\mathcal{S}$ is in fact an isomorphism since it is a nonzero homomorphism onto $A_\mathcal{S}$.

Let $A_0$ be the subalgebra of $L_2$ generated by $\setof{ p_v }{ v \in E^0 }$, and let $A$ be the subalgebra of $L_2$ generated by $\setof{ p_v }{ v \in E^0 }$ and $\mathcal{E}$.  A computation shows $A=\mathcal{E} \oplus A_0$.  Let us denote by $\setof{ r_{w_i}, y_{f_j} , y_{f_j^*}}{ i=1,2,3,4, j = 1,2,\ldots, 10 }$ the image of the canonical generators of $C_{\mathsf{k}}(\mathbf{E}_{**}, \{ w_2,w_3,w_4 \})$ in $L_2$ under the chosen isomorphism between $C_{\mathsf{k}}(\mathbf{E}_{**}, \{ w_2,w_3,w_4 \})$ and $\mathcal{E}$ composed with the embedding into $L_2$. 

Let $z_0 \in \mathcal{E}$ be the invertible element in Lemma~\ref{lem:csmurray-new}, and set $z=z_0+\sum_{v\in E^0}p_v \in A$.  Clearly $z$ is an invertible element in $A$.  Since $A$ is a unital subalgebra of $A_\mathcal{S}$, $z$ is an invertible element in $A_\mathcal{S}$.  Hence, for all $x \in A_{\mathcal{S}}$, we have that $z x$ and $xz$ are elements of $A_ \mathcal{S}$.  By construction of $z$, we have that 
\begin{align*}
	z q_v  &= q_vz=q_v, \text{ for all } v \in E^0, \\
	z t_e  &= t_ez=t_e, \text{ for all } e \in E^1, \\
		z t_{e^*}  &= t_{e^*}z=t_{e^*}, \\
	z \left( t_{e_1} t_{e_1}^* + t_{e_2} t_{e_2}^* \right) z^{-1} &= y_{f_1} y_{f_1}^* + y_{f_2} y_{f_2}^* +y_{f_5} y_{f_5}^*, \\
	z \left( q_{v_1} - \left( t_{e_1} t_{e_1}^* + t_{e_2} t_{e_2}^* \right) \right) z^{-1}&= r_{w_1} - \left( y_{f_1} y_{f_1}^* + y_{f_2} y_{f_2}^* +y_{f_5} y_{f_5}^* \right), \\
z q_{v_1} z^{-1} &= r_{w_1}, \\ 
z q_{v_2} z^{-1} &= r_{w_2}+r_{w_3}+r_{w_4}.
\end{align*}

We will now define a Cuntz-Krieger $E_{u,--}$-family in $L_2$.
We let
\begin{align*}
P_v &= q_v = p_v &&\text{for each }v\in E^0, \\
P_{w_i}&=r_{w_i} &&\text{for each }i=1,2,3,4.
\end{align*}
Moreover, we let
\begin{align*}
S_e&=t_e=s_e &&\text{for each }e\in E^1, \\
S_{e^*}&=t_{e^*}=s_{e^*} &&\text{for each }e\in E^1, \\
S_{f_i}&=y_{f_i} &&\text{for each }i=1,2,\ldots,10, \\
S_{f_i^*}&=y_{f_i^*} &&\text{for each }i=1,2,\ldots,10, \\
S_{d_1}&=zt_{d_1}z^{-1}=zx_1z^{-1},\\
S_{d_1^*}&=zt_{d_1^*}z^{-1}=zy_1z^{-1},\\
S_{d_2}&=zt_{d_2}z^{-1}=zx_2z^{-1} ,\\
S_{d_2^*}&=zt_{d_2^*}z^{-1}=zy_2z^{-1}.
\end{align*}
Denote this family by $\mathcal{T}$.  By construction $\setof{ P_v }{ v \in E_{u,--}^0 }$ is a set of orthogonal idempotents, and 
\begin{align*}
S_e &=S_e p_{r(e)} = p_{s(e)} S_e,  & S_{e^*} &=S_{e^*} p_{s(e)} = p_{r(e)} S_{e^*}\\
S_{e^*}S_e&=s_{e^*}s_e=p_{r(e)}, 
& S_eS_{e^*}&=s_es_{e^*}, \\
S_{f_i^*}S_{f_i}&=y_{f_i^*}y_{f_i}=r_{f_i}, & 
S_{f_i}S_{f_i^*}&=y_{f_i}y_{f_i^*}, \\ 
S_{d_1^*}S_{d_1}&=r_{w_1}, & S_{d_1}S_{d_1^*}&=s_{d_1}s_{d_1^*}, \\ 
S_{d_2^*}S_{d_2}&=p_u, & S_{d_2}S_{d_2^*}&=r_{w_1}-\left(y_{f_1}y_{f_1^*}+y_{f_2}y_{f_2^*}+y_{f_5}y_{f_5^*}\right), 
\end{align*}
for all $e\in E^1$ and $i=1,2,\ldots,10$. From this, it is clear that $\mathcal{T}$ will satisfy the Cuntz-Krieger relations at all vertices in $E^0$.
Similarly, we see that since $\setof{ r_{w_i}, y_{f_j} , y_{f_j^*}}{ i=1,2,3,4, j = 1,2,\ldots, 10 }$ is a Cuntz-Krieger $(\mathbf{E}_{**}, \{ w_2,w_3,w_4 \})$-family, $\mathcal{T}$ will satisfy the relations at the vertices $w_2, w_3, w_4$. 
It only remains to check the summation relation at $w_1$, for that we compute
\begin{align*}
	\sum_{s_{E_{u,--}}(e) = w_1} S_e S_{e^*}	&= S_{f_1} S_{f_1^*} + S_{f_2} S_{f_2^*} + S_{f_5} S_{f_5^*} + S_{d_2} S_{d_2^*} \\
											&= y_{f_1} y_{f_1^*} + y_{f_2} y_{f_2^*} +y_{f_5} y_{f_5^*} + r_{w_1}-\left(y_{f_1}y_{f_1^*}+y_{f_2}y_{f_2^*}+y_{f_5}y_{f_5^*}\right) \\
											&= r_{w_1} = P_{w_1}.
\end{align*}
Hence $\mathcal{T}$ is a Cuntz-Krieger $E_{u,--}$-family. 

The universal property of $L_{\mathsf{k}}(E_{u,--})$ provides a surjective homomorphism from $L_{\mathsf{k}}(E_{u,--})$ to $A_{\mathcal{T}} \subseteq L_2$, where $A_\mathcal{T}$ is the subalgebra of $L_2$ generated by $\mathcal{T}$.   Since $E$ is SPI graph, $E_{u, --}$ is a SPI graph, and hence $L_{\mathsf{k}}(E_{u, --})$ is a simple ring.   Therefore, $L_{\mathsf{k}}(E_{u,--})$ is isomorphic to $A_\mathcal{T}$ as the homomorphism is clearly nonzero.

Recall that $z \in A$.  Therefore, $\mathcal{T} \subseteq A_\mathcal{S}$ since $A \subseteq A_\mathcal{S}$ and since $r_{w_i}, y_{f_j}, y_{f_j^*}\in \mathcal{E} \subseteq A_\mathcal{S}$, for $i=1,2,3,4$, $j = 1,2,\ldots, 10$.  So $A_\mathcal{T} \subseteq A_\mathcal{S}$.  Note that $p_v \in A_\mathcal{T}$ for all $v \in E^0$ and $r_{w_i} , y_{f_j}, y_{f_j^*} \in A_\mathcal{T}$ for all $i=1,2, 3, 4$ and $j = 1, 2,\ldots, 10$.  Therefore, $A = \mathcal{E} \oplus A_0 \subseteq A_\mathcal{T}$.  Therefore, $z = z_0+\sum_{v\in E^0}p_v \in A_\mathcal{T}$.  Consequently, $z x z^{-1}$ and $z^{-1} x z$ are elements of $A_\mathcal{T}$.  Hence, $\mathcal{S} \subseteq A_\mathcal{T}$, which implies that $A_\mathcal{S}  \subseteq A_\mathcal{T}$.  Consequently, $A_\mathcal{S} = A_\mathcal{T}$.
Therefore 
\[
	L_{\mathsf{k}}(E_{u,-}) \cong A_\mathcal{S} =A_\mathcal{T} \cong L_{\mathsf{k}}(E_{u,--}).\qedhere
\]
\end{proof}

We now have all the tools to prove our main result.

\begin{proof}[Proof of Theorem~\ref{thm-main}]
Assume 
$$
(K_0( L_{\mathsf{k}}(E)), [1_{L_{\mathsf{k}}(E)]}] ) \cong (K_0( L_{\mathsf{k}}(F)), [1_{L_{\mathsf{k}}(F)]}] ).
$$
If, $\det( \mathsf{I} - A_E^t) =  \det( \mathsf{I} - A_F^t)$, then the desired conclusion follows from \cite[Corollary~2.7]{ALPS11}.  Assume $\det( \mathsf{I} - A_E^t) \neq  \det( \mathsf{I} - A_F^t)$.  As $K_0( L_{\mathsf{k}}(E)) \cong K_0( L_{\mathsf{k}}(F))$, it is well-known that $\det( \mathsf{I} - A_F^t) = -\det( \mathsf{I} - A_E^t)$.  Therefore,
$$
K_0( L_{\mathsf{k}}( E_{u,--} )) \cong K_0 ( L_{\mathsf{k}}( E )) \quad \text{and} \quad \det( \mathsf{I} - A_{E_{u,--}}^t) = \det( \mathsf{I} - A_{E}^t).
$$
and 
$$
K_0( L_{\mathsf{k}}( E_{u,-} )) \cong K_0 ( L_{\mathsf{k}}( F )) \quad \text{and} \quad \det( \mathsf{I} - A_{E_{u,-}}^t) = \det( \mathsf{I} - A_{F}^t).
$$ 
By \cite[Theorem~1.25]{ALPS11}, $L_{\mathsf{k}}(E)$ and $L_{\mathsf{k}}(E_{u,--})$ are Morita equivalent, and $L_{\mathsf{k}}(F)$ and $L_{\mathsf{k}}(E_{u,-})$ are Morita equivalent.  Hence, by Theorem~\ref{t:cuntz-splice-1}, $L_{\mathsf{k}}(E)$ and $L_{\mathsf{k}}(F)$ are Morita equivalent.  By \cite[Theorem~2.5]{ALPS11}, $L_{\mathsf{k}}(E) \cong L_{\mathsf{k}}(F)$.  
\end{proof}

\section{Closing remarks}

Essentially all known positive answers to the Algebraic Kirchberg-Phillips Question have used results of Franks \cite{Franks-flow} and Huang \cite{Huang-flow, Huang-auto}, or been inspired by their techniques.  Thus, all results require that the sign of the appropriate determinants are equal. Consequently, we introduce the following question.   

\begin{question*}[The Sign of the Determinant Question]
    Let $E$ and $F$ be finite SPI graphs.  Suppose $L_\mathsf{k}(E)$ and $L_\mathsf{k}(F)$ are isomorphic.  Is $\mathrm{sgn}(\det( \mathsf{I}-A_E^t)) = \mathrm{sgn}(\det( \mathsf{I}-A_F^t))$? 
\end{question*}

Recall that $K_0(L_{\mathsf{k}}(\mathbf{E}_*))$ and $K_0 (L_{\mathsf{k}}(\mathbf{E}_{**} ))$ are isomorphic to the trivial group and
$$
\det( \mathsf{I} - A_{\mathbf{E}_*}^t) =-1 \quad \text{and} \quad \det( \mathsf{I} - A_{\mathbf{E}_{**}}^t)=1.
$$ 
Therefore, if the Sign of the Determinant Question has a positive answer, then the Algebraic Kirchberg-Phillips Question, and in particular the $L_2$--$L_{2_-}$-Question have negative answers.  If the $L_2$--$L_{2_-}$-Question has a positive answer, the Sign of the Determinant Question has a negative answer.

\subsection{The Algebraic Kirchberg-Phillips Question via Abrams et al.}  We now recall another hypothesis that leads to a positive answer for the Algebraic Kirchberg-Phillips Question given in \cite{ALPS11}.  We may represent $L_2$ and $L_{2_-}$ in $\mathrm{End}_{\mathsf{k}}(V)$, where $V$ is a $\mathsf{k}$-vector space of countable dimension with basis $\{v_i\}_{i=1}^\infty$.  Let $u \in \mathrm{End}_k(V)$ be the endomorphism defined by $u( v_i) = \delta_{1, i} v_1$, let $\mathcal{E}_2$ be the subalgebra of $\mathrm{End}_\mathsf{k}(V)$ generated by $L_2$ and $u$, and let $\mathcal{E}_{2_-}$ be the subalgebra of $\mathrm{End}_\mathsf{k}(V)$ generated by $L_{2_-}$ and $u$.  By \cite[Theorem~2.5 and Theorem~2.14]{ALPS11}, if there exists an isomorphism from $L_2$ to $L_{2_-}$ that extends to an isomorphism $T \colon \mathcal{E}_2 \to \mathcal{E}_{2_-}$ such that $T(u)=u$, then the Algebraic Kirchberg-Phillips Question has a positive answer.

Note that the approach presented in \cite{ALPS11} is a two-step process: (1) show $L_2 \cong L_{2_-}$ and (2) extend the isomorphism to an isomorphism between $\mathcal{E}_2$ and $\mathcal{E}_{2_-}$.  On the other hand the approach presented here is a one-step process.  All one needs to show is the existence of an isomorphism between the Leavitt path algebras $L_{\mathsf{k}}(\mathbf{F}_{*})$ and $L_{\mathsf{k}}(\mathbf{F}_{**})$.  It is unknown to the author whether or not there is a relationship between the two approaches.

\begin{question}
Are the statements 
\begin{enumerate}
    \item $L_{\mathsf{k}}(\mathbf{F}_{*}) \cong L_{\mathsf{k}}(\mathbf{F}_{**})$ and

    \item there exists an isomorphism from $L_2$ to $L_{2_-}$ that extends to an isomorphism $T \colon \mathcal{E}_2 \to \mathcal{E}_{2_-}$ such that $T(u)=u$
\end{enumerate}
equivalent?
\end{question}

\subsection{The analytic approach to classification of graph C*-algebras has no algebraic analogue}\label{subsec-analytic-class}

Much of the current efforts toward a resolution of the Algebraic Kirchberg-Phillips Question are toward determining whether $L_2$ and $L_{2_-}$ are isomorphic.  We show that the obvious algebraic translation of the approach taken by R{\o}rdam in \cite{Ror95} to prove that $\mathcal{O}_2$ and $\mathcal{O}_{2_-}$ are $*$-isomorphic fails to prove that $L_2$ and $L_{2_-}$ are isomorphic.  To show this, we remind the reader of the notion of approximately unitarily equivalent homomorphisms.

\begin{definition}\label{def-approx-unit-eq}
Let $A$ and $B$ be unital $C^*$-algebras.  Two $*$-homomorphisms $\varphi, \psi \colon A \to B$ are \emph{approximately unitarily equivalent} provided that for every $\epsilon >0$ and for every finite subset $\mathcal{F}$ of $A$, there exists a unitary $u$ in $B$ ($u^*u=uu^*=1$) such that 
$$
\| u \varphi(x) u^* - \psi(x) \| < \epsilon
$$
for all $x \in \mathcal{F}$.  

When $A = B$, we say that $\varphi$ is \emph{approximately inner} provided that $\varphi$ is approximately unitarily equivalent to $\mathrm{id}$.
\end{definition}

The key property used in the proof of R{\o}rdam is that any unital, $*$-endomorphism of $\mathcal{O}_2$ is approximately inner.  Similarly, any unital, $*$-endomorphism of $\mathcal{O}_{2_-}$ is approximately inner.  This is known in ``the $C^*$-algebra Classification Program'' as the \emph{uniqueness} part of the classification.  This, together with the existence of unital embeddings of $\mathcal{O}_2$ into $\mathcal{O}_{2_-}$ and $\mathcal{O}_{2_-}$ into $\mathcal{O}_2$, is used to show $\mathcal{O}_2$ and $\mathcal{O}_{2_-}$ are $*$-isomorphic through an \emph{Elliott Intertwining Argument}.  More precisely, we identify $\mathcal{O}_2$ and $\mathcal{O}_{2_-}$ as direct limits with the identity map for all the maps in the directed system, and we use uniqueness and existence theorems to build a diagram
\begin{equation*}
\begin{tikzcd}
    \mathcal{O}_2 \ar[rr, "\mathrm{id}"] \ar[rd]  & & \mathcal{O}_2  \ar[rd] \ar[rr, "\mathrm{id}"]  & &  \mathcal{O}_2 \ar[rd] \cdots   &  &\mathcal{O}_2   \arrow[d, dashrightarrow, bend right] \\
                        & \mathcal{O}_{2_-} \ar[ru] \ar[rr, "\mathrm{id}"] & &  \mathcal{O}_{2_-} \ar[ru] \ar[rr, "\mathrm{id}"]&  &  \cdots & \mathcal{O}_{2_-} \arrow[u, dashrightarrow, bend right]
\end{tikzcd}  
\end{equation*}
such that each triangle in the diagram approximately commutes on larger finite sets with an error tending to zero as one goes further to the right of the diagram.  These approximately commuting triangles are used to obtain an isomorphism between the direct limits, and hence an isomorphism between $\mathcal{O}_2$ and $\mathcal{O}_{2_-}$.

By \cite[Corollary~4.10]{embedd-l2}, there is unital embedding of $L_2$ into $L_{2_-}$, and there is a unital embedding of $L_{2_-}$ into $L_2$.  To mirror R{\o}rdam arguments, one would need to develop a ``uniqueness type'' theorem for unital endomorphisms of $L_2$ and unital endomorphisms of $L_{2_-}$.  As algebras may not have a metric nor unitaries, it is natural to define approximately unitarily equivalent homomorphisms of algebras as follows.

\begin{definition}\label{def-approx-inner}
Let $R$ and $S$ be unital rings.  Two homomorphisms $\varphi, \psi \colon R \to S$ are \emph{approximately unitarily equivalent} provided that for every finite subset $\mathcal{F}$ of $A$, there exists an invertible element $u$ in $S$ such that 
$$
 u \varphi(x) u^{-1} = \psi(x) 
$$
for all $x \in \mathcal{F}$.  

When $R = S$, we say that $\varphi$ is \emph{approximately inner} provided that $\varphi$ is approximately unitarily equivalent to $\mathrm{id}$.
\end{definition}

We show that there are unital endomorphisms of $L_2$ that are not approximately inner by showing first that any unital endomorphism of a Leavitt path algebra over a finite graph that is approximately inner is always an inner automorphism.  Then we show there are unital endomorphisms of $L_2$ that are not inner automorphisms.

\begin{theorem}\label{thm-approx-inner-is-inner}
    Let $E$ be a finite graph and let $\alpha \colon L_{\mathsf{k}}(E) \to L_{\mathsf{k}}(E)$ be a unital endomorphism that is approximately inner.  Then $\alpha$ is an inner automorphism.
\end{theorem}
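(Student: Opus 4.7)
The proof plan here is very short because the algebraic definition of approximately inner (Definition~\ref{def-approx-inner}) is really a finitary condition rather than a genuine limiting condition, and $L_{\mathsf{k}}(E)$ is finitely generated as a $\mathsf{k}$-algebra whenever $E$ is finite. The strategy is to apply the approximately inner hypothesis to a single finite set large enough to generate the whole algebra, and then conclude from agreement on generators.

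First, let $\mathcal{G} = \{p_v : v \in E^0\} \cup \{s_e, s_{e^*} : e \in E^1\}$ be the canonical generating set of $L_{\mathsf{k}}(E)$. Since $E$ is finite, both $E^0$ and $E^1$ are finite, so $\mathcal{G}$ is a finite subset of $L_{\mathsf{k}}(E)$ and, by definition of a Leavitt path algebra, the subalgebra of $L_{\mathsf{k}}(E)$ generated by $\mathcal{G}$ is all of $L_{\mathsf{k}}(E)$.

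Next, applying the hypothesis that $\alpha$ is approximately inner (in the sense of Definition~\ref{def-approx-inner}) to the finite set $\mathcal{F} = \mathcal{G}$, we obtain an invertible element $u \in L_{\mathsf{k}}(E)$ such that $u x u^{-1} = \alpha(x)$ for every $x \in \mathcal{G}$. The map $\mathrm{Ad}_u \colon L_{\mathsf{k}}(E) \to L_{\mathsf{k}}(E)$ defined by $\mathrm{Ad}_u(y) = u y u^{-1}$ is a ring automorphism (with inverse $\mathrm{Ad}_{u^{-1}}$), and $\alpha$ is a unital ring homomorphism. Since the two ring homomorphisms $\mathrm{Ad}_u$ and $\alpha$ agree on the generating set $\mathcal{G}$, they agree on the subalgebra generated by $\mathcal{G}$, namely all of $L_{\mathsf{k}}(E)$. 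Hence $\alpha = \mathrm{Ad}_u$, which is an inner automorphism.

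There is essentially no obstacle: the only subtlety is recognising that because approximately inner here is defined via exact conjugation on finite subsets (with no topology or metric to take limits in), and because $L_{\mathsf{k}}(E)$ is finitely generated, one can bypass any intertwining/limit argument by choosing $\mathcal{F}$ to be a finite generating set. This is the precise algebraic reason the analytic Elliott intertwining approach discussed in Section~\ref{subsec-analytic-class} collapses for Leavitt path algebras of finite graphs.
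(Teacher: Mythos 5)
Your proof is correct and follows exactly the same route as the paper's: apply the approximately-inner condition to the finite generating set $E^0 \cup E^1 \cup (E^1)^*$ to obtain a single invertible element $u$, then conclude $\alpha = \mathrm{Ad}(u)$ since the two homomorphisms agree on generators. Nothing to add.
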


\begin{proof}
    Since $\alpha$ is approximately inner, there exists an invertible element $u$ in $L_{\mathsf{k}}(E)$ such that $\alpha(x) = u x u^{-1}$ for all $x \in E^1 \cup (E^1)^* \cup E^0$ as $E^1 \cup (E^1)^* \cup E^0$ is a finite set by assumption.  Therefore, the homomorphisms $\alpha$ and $\mathrm{Ad}(u)$ agree on the generators of $L_{\mathsf{k}}(E)$ which implies $\alpha = \mathrm{Ad}(u)$.
\end{proof}

\begin{theorem}
There exists a unital, endomorphism of $L_2$ that is not approximately inner.
\end{theorem}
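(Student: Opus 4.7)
The plan is to leverage Theorem~\ref{thm-approx-inner-is-inner}, which guarantees that any approximately inner unital endomorphism of a Leavitt path algebra over a finite graph is already an inner automorphism. Consequently it suffices to exhibit a unital endomorphism $\alpha \colon L_2 \to L_2$ that is \emph{not} an automorphism, since any inner automorphism is in particular a bijection. The most transparent way to do this is to produce a unital endomorphism whose image is a proper subalgebra of $L_2$.

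Writing $L_2$ with the standard generators $x_1,x_2,y_1,y_2$ satisfying $y_i x_j = \delta_{ij}$ and $x_1 y_1 + x_2 y_2 = 1$, I would introduce the shift-type endomorphism
$$
\alpha \colon L_2 \to L_2, \qquad \alpha(a) = x_1 a y_1 + x_2 a y_2.
$$
A brief check using the Cuntz--Krieger relations shows that $\alpha$ is a unital algebra homomorphism: the cross terms in $\alpha(a)\alpha(b)$ vanish because $y_i x_j = 0$ for $i \neq j$, leaving $\alpha(a)\alpha(b) = \alpha(ab)$, while $\alpha(1) = x_1 y_1 + x_2 y_2 = 1$ is immediate from the summation relation.

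The crucial step is to show that $x_1 \notin \alpha(L_2)$. Suppose for contradiction that $\alpha(a) = x_1$ for some $a \in L_2$. Multiplying on the left by $y_1$ and using $y_1 x_1 = 1$ together with $y_1 x_2 = 0$ reduces the equation to $a y_1 = 1$. Multiplying this identity on the right by $x_2$ then yields $a \cdot 0 = x_2$, i.e.\ $x_2 = 0$, which is absurd. Thus $\alpha$ is not surjective, is not an automorphism, and hence is not inner. Theorem~\ref{thm-approx-inner-is-inner} now concludes that $\alpha$ is not approximately inner.

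There is no real obstacle in this plan; it is essentially a verification. The substantive content is already packaged into Theorem~\ref{thm-approx-inner-is-inner}: its strength lies in converting approximate innerness for a finitely generated object into exact innerness, which is precisely what reduces the theorem to exhibiting any non-automorphic unital endomorphism, and the canonical shift supplies one.
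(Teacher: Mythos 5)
Your proposal is correct and follows essentially the same route as the paper: both exhibit the canonical shift endomorphism $a \mapsto s_1 a s_1^* + s_2 a s_2^*$ (the paper writes it with $s_1 = e_1+e_2$, $s_2 = e_3+e_4$ in the graph presentation of $L_2$), prove non-surjectivity by a short Cuntz--Krieger computation, and invoke Theorem~\ref{thm-approx-inner-is-inner}. The only difference is the witness for non-surjectivity ($x_1$ in your argument versus $s_1 s_2^*$ in the paper's), which is immaterial.
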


\begin{proof}
Define $\varphi \colon L_2 \to L_2$ by $\varphi(x) = (e_1+e_2) x (e_1+e_2)^* + (e_3+e_4) x(e_3+e_4)^*$.  Then $\varphi$ is a unital homomorphism.  We claim that $\varphi$ is not a surjection.  Suppose to contrary $\varphi$ is a surjection.  Then there exists $\ell \in L_2$ such that 
$$
(e_1+e_2) \ell (e_1+e_2)^* + (e_3+e_4) \ell(e_3+e_4)^* = (e_1+e_2)(e_3+e_4)^*.
$$
Consequently, 
\begin{align*}
1_{L_2}  &=(e_1+e_2)^*(e_1+e_2)(e_3+e_4)^*(e_3+e_4) \\
    &= (e_1+e_2)^*\left[ (e_1+e_2) \ell (e_1+e_2)^* + (e_3+e_4) \ell (e_3+e_4)^* \right](e_3+e_4) \\
    &= 0.
\end{align*}
Thus, $\varphi$ is not a surjection.  Since $\varphi$ is not a surjection, by Theorem~\ref{thm-approx-inner-is-inner}, $\varphi$ is not approximately inner. 
\end{proof}

\subsection{Equivalences of the various open questions in the classification program of Leavitt path algebras}  This section is devoted to reformulating the \emph{$L_2$--$L_{2_-}$-Question} and the \emph{Algebraic Kirchberg-Phillips Question}.  

\begin{question*}[The $L_2$--$L_{2_-}$-Question]
Are the Leavitt path algebras $L_2$ and $L_{2_-}$ isomorphic?
\end{question*}

\begin{question*}[The Algebraic Kirchberg-Phillips Question]
Let $\mathsf{k}$ be a field, and let $E$ and $F$ be two finite SPI graphs.  
Assume 
$$
(K_0( L_{\mathsf{k}}(E)) ,  [ 1_{L_{\mathsf{k}}(E)} ] ) \cong (K_0(L_{\mathsf{k}}(F)), [ 1_{L_{\mathsf{k}}(F)} ] ),
$$
that is, there exists an isomorphism $\varphi$ from $K_0( L_{\mathsf{k}}(E))$ to $K_0(L_{\mathsf{k}}(F))$ such that $\varphi( [1_{L_{\mathsf{k}}(E)} ] )= [ 1_{L_{\mathsf{k}}(F)} ]$.  Is $L_{\mathsf{k}}(E)$ isomorphic to $L_{\mathsf{k}}(F)$?
\end{question*}

We first show that the \emph{$L_2$--$L_{2_-}$-Question} is equivalent to determining whether there is (up to isomorphism) a unique  unital, simple, purely infinite Leavitt path algebra with trivial $K$-theory.

\begin{theorem}\label{thm-unique-trivial-kthy}
There exists (up to isomorphism) a unique unital, simple, purely infinite Leavitt path algebra with trivial $K$-theory if and only if $L_2$ and $L_{2_-}$ are isomorphic.
\end{theorem}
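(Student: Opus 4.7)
The forward direction is immediate: if a unique such algebra exists up to isomorphism, then since $L_2$ and $L_{2_-}$ are both unital, simple, purely infinite Leavitt path algebras with trivial $K$-theory (arising from the finite SPI graphs $\mathbf{E}_*$ and $\mathbf{E}_{**}$), they must be isomorphic.

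For the converse, my plan is to apply \cite[Corollary~2.7]{ALPS11} after pinning down the determinant. Let $L_\mathsf{k}(E)$ be any unital, simple, purely infinite Leavitt path algebra with trivial $K$-theory; then $E$ is a finite SPI graph and $K_0(L_\mathsf{k}(E)) \cong \cok(\mathsf{I} - A_E^t) = 0$, so $\mathsf{I} - A_E^t$ is a surjective endomorphism of $\Z^{|E^0|}$. Since $\Z^n$ is Hopfian, any such endomorphism is an automorphism, which forces $\det(\mathsf{I} - A_E^t) = \pm 1$. Recalling that $\det(\mathsf{I} - A_{\mathbf{E}_*}^t) = -1$ and $\det(\mathsf{I} - A_{\mathbf{E}_{**}}^t) = +1$, and that the pointed $K_0$-groups of $L_2$, $L_{2_-}$, and $L_\mathsf{k}(E)$ are all trivial (hence automatically pointed-isomorphic with matching determinants when paired with the appropriate model), \cite[Corollary~2.7]{ALPS11} applies and yields either $L_\mathsf{k}(E) \cong L_\mathsf{k}(\mathbf{E}_*) = L_2$ (when the determinant is $-1$) or $L_\mathsf{k}(E) \cong L_\mathsf{k}(\mathbf{E}_{**}) = L_{2_-}$ (when the determinant is $+1$). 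Under the hypothesis $L_2 \cong L_{2_-}$, both possibilities collapse to $L_\mathsf{k}(E) \cong L_2$, which establishes uniqueness.

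The argument has no real obstacle: all of the heavy lifting is packaged inside \cite[Corollary~2.7]{ALPS11}, which in turn rests on the symbolic-dynamics input of Franks and Huang. The only additional ingredient is the elementary observation that trivial $K$-theory forces $\det(\mathsf{I} - A_E^t)$ to lie in $\{-1,+1\}$, after which the two resulting isomorphism classes are realized by $L_2$ and $L_{2_-}$ respectively. The equivalence with the $L_2$--$L_{2_-}$-Question is then just the assertion that these two classes coincide.
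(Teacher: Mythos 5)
Your proposal is correct and follows essentially the same route as the paper: reduce to showing that any such algebra comes from a finite graph with no sinks whose matrix $\mathsf{I}-A_E^t$ has determinant $\pm 1$, then invoke \cite[Corollary~2.7]{ALPS11} to land in the isomorphism class of $L_2$ or $L_{2_-}$. The only cosmetic difference is that you derive $\det(\mathsf{I}-A_E^t)=\pm1$ from the Hopfian property of $\Z^n$ where the paper reads it off the Smith normal form; these are interchangeable.
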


\begin{proof}
Let $E$ be a graph for which $L_{\mathsf{k}}(E)$ is a unital, simple, purely infinite Leavitt path algebra with trivial $K$-theory.  Note $E^0$ is finite since $L_\mathsf{k}(E)$ is unital.  Since $L_\mathsf{k}(E)$ is purely infinite, $E$ has no sinks.  Lastly, trivial $K$-theory implies that each vertex emits finitely many edges.  Hence, $E$ must be a finite graph with no sinks.  Since 
\[
\{ 0 \} = K_0(L_{\mathsf{k}}(E)) \cong \mathrm{coker}( \mathsf{I}-A_E^t),
\]
the Smith normal form of $\mathsf{I} - A_E^t$ contains only $-1$s and $1$s along its diagonal (see \cite{MN-book} for the definition of the Smith normal form of an integral matrix).  Thus,
$$
\det( \mathsf{I} - A_E^t) = 1 \quad \text{or} \quad \det( \mathsf{I} - A_E^t) = -1.
$$
Hence, by \cite[Corollary~2.7]{ALPS11}, $L_{\mathsf{k}}(E) \cong L_{2}$ or $L_{\mathsf{k}}(E) \cong L_{2,-}$.  The result now follow from this observation.
\end{proof}

The $L_2$--$L_{2_-}$-Question is a special case of the question of determining whether the Leavitt path algebras of $E$ and its Cuntz-splice are Morita equivalent.  To see this, note that $L_{2_-}$ is precisely the Leavitt path algebra of the Cuntz-splice of the graph defining $L_2$, and note that $L_2$ and $L_{2_-}$ are Morita equivalent if and only if $L_2 \cong L_{2_-}$ as $L_2$ and $L_{2_-}$ have trivial $K_0$-groups.  

\begin{question*}[The Invariance of the Cuntz-Splice Question]\label{qtn-inv-cuntzsplice}
Let $E$ be a finite SPI graph.  Is $L_{\mathsf{k}}(E)$ Morita equivalent to $L_{\mathsf{k}}(E_{u,-})$ for some $u \in E^0$?
\end{question*}

For $C^*$-algebras, Cuntz first presented this problem in \cite[Problem~1]{Cun86}.  It was resolved by R{\o}rdam \cite{Ror95}.

\begin{theorem}\label{thm-equivalent-CS-AKP}
The Invariance of the Cuntz-Splice Question has the same answer as the Algebraic Kirchberg-Phillips Question.
\end{theorem}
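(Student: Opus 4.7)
The plan is to establish both directions of the equivalence, using the double Cuntz-splice as the key bridge in the harder direction.

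For the implication ICS $\Rightarrow$ AKP, I would mirror the proof of Theorem~\ref{thm-main}, with the Morita equivalence supplied by ICS replacing the isomorphism of Cohn path algebras. Let $E$ and $F$ be finite SPI graphs with $(K_0(L_{\mathsf{k}}(E)), [1_{L_{\mathsf{k}}(E)}]) \cong (K_0(L_{\mathsf{k}}(F)), [1_{L_{\mathsf{k}}(F)}])$. When $\det(\mathsf{I} - A_E^t) = \det(\mathsf{I} - A_F^t)$, the conclusion follows at once from \cite[Corollary~2.7]{ALPS11}. Otherwise the two determinants have opposite sign, and ICS provides a vertex $u \in E^0$ for which $L_{\mathsf{k}}(E)$ is Morita equivalent to $L_{\mathsf{k}}(E_{u,-})$. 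Since Cuntz-splicing flips the sign of the determinant, $E_{u,-}$ and $F$ now have matching $K_0$-groups and matching determinants, so \cite[Theorem~1.25]{ALPS11} gives a Morita equivalence between $L_{\mathsf{k}}(E_{u,-})$ and $L_{\mathsf{k}}(F)$. Transitivity yields a Morita equivalence between $L_{\mathsf{k}}(E)$ and $L_{\mathsf{k}}(F)$, which, combined with the assumed pointed $K_0$-isomorphism, is upgraded to an isomorphism by \cite[Theorem~2.5]{ALPS11}.

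For the converse AKP $\Rightarrow$ ICS, the plan is to use the double Cuntz-splice to build a graph for which AKP directly supplies an isomorphism with $L_{\mathsf{k}}(E_{u,-})$. Let $E$ be a finite SPI graph and choose a regular vertex $u \in E^0$ supporting at least two return paths (such a vertex always exists in a finite SPI graph). The double Cuntz-splice $E_{u,--}$ preserves both $K_0$ and the sign of the determinant, so \cite[Theorem~1.25]{ALPS11} yields unconditionally that $L_{\mathsf{k}}(E)$ and $L_{\mathsf{k}}(E_{u,--})$ are Morita equivalent. On the other hand, as noted in the excerpt, $E_{u,--}$ is isomorphic to the Cuntz-splice of $E_{u,-}$ at $v_1$. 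A direct application of the Cuntz--Krieger relation at $v_2$ in $E_{u,-}$ forces $[v_1]=0$ in $K_0(L_{\mathsf{k}}(E_{u,-}))$, so the shift in the unit class produced by this second Cuntz-splice (as recorded in \cite[Theorem~2.12]{ALPS11}) vanishes. Consequently the natural identification $K_0(L_{\mathsf{k}}(E_{u,--})) \cong K_0(L_{\mathsf{k}}(E_{u,-}))$ is a pointed isomorphism, and AKP delivers $L_{\mathsf{k}}(E_{u,--}) \cong L_{\mathsf{k}}(E_{u,-})$. Composing with the Morita equivalence above produces the Morita equivalence of $L_{\mathsf{k}}(E)$ and $L_{\mathsf{k}}(E_{u,-})$ demanded by ICS.

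The main obstacle is the AKP $\Rightarrow$ ICS direction: since $E$ and $E_{u,-}$ have determinants of opposite sign, \cite[Theorem~1.25]{ALPS11} does not connect them directly, and AKP (being a pointed invariant) requires a target graph whose Leavitt path algebra already has the correct pointed $K_0$-invariant. The cancellation $[v_1]=0$ in $K_0(L_{\mathsf{k}}(E_{u,-}))$ is exactly what allows the double Cuntz-splice to play this intermediary role: $L_{\mathsf{k}}(E_{u,--})$ has the same pointed $K_0$-invariant as $L_{\mathsf{k}}(E_{u,-})$, while $E_{u,--}$ has the same determinant sign as $E$. Without this cancellation, a reduction from AKP to ICS would presumably demand a more delicate matrix-amplification argument to realign unit classes in $K_0$.
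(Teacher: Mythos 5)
Your proof is correct. The ICS $\Rightarrow$ AKP direction coincides with the paper's argument essentially verbatim. The AKP $\Rightarrow$ ICS direction, however, takes a genuinely different route. The paper repairs the pointed-$K_0$ mismatch between $E$ and $E_{u,-}$ on the $E_{u,-}$ side directly: it attaches a source with a single edge into $u$, producing a graph $G$ with $L_{\mathsf{k}}(G)$ Morita equivalent to $L_{\mathsf{k}}(E_{u,-})$ and with $[1_{L_{\mathsf{k}}(G)}]$ matching $[1_{L_{\mathsf{k}}(E)}]$ (the single splice shifts the unit by $-[u]$ and the new source contributes $+[u]$), so AKP applied to the pair $(E,G)$ yields the desired Morita equivalence with no further machinery. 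You instead apply AKP to the pair $(E_{u,-}, E_{u,--})$, using the correct observation that the Cuntz--Krieger relation at $v_2$ forces $[v_1]=0$ and hence that the second splice does not move the unit class, and then you bridge $E$ to $E_{u,--}$ via \cite[Theorem~1.25]{ALPS11}. Both arguments are valid; the paper's is slightly more economical in that this direction needs no appeal to the Franks--Huang results, whereas yours has the structural advantage of running exactly parallel to the proof of Theorem~\ref{thm-main}, with AKP standing in for Theorem~\ref{t:cuntz-splice-1}. The cancellation $[v_1]=0$ that you isolate is in fact the same phenomenon that makes the paper's source trick work, so the two proofs are exploiting the identical $K_0$-computation from opposite ends. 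One small point worth making explicit in your write-up: AKP applies to the pair $(E_{u,-}, E_{u,--})$ only because both are finite SPI graphs, which holds since $E$ is and since $u$ (and then $v_1$) supports two return paths.
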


\begin{proof}
A computation shows that $K_0(L_{\mathsf{k}}(E)) \cong K_0( L_{\mathsf{k}}(E_{u,-}))$ but it is not necessarily true that there is a pointed isomorphism between their $K_0$-groups.  By adding a source to $E_{u,-}$ with one edge from the source to $u$, we get a graph $G$ such that $L_{\mathsf{k}}(G)$ and $L_{\mathsf{k}}(E_{u,-})$ are Morita equivalent, and 
$$
(K_0(L_{\mathsf{k}}(E)) , [ 1_{L_{\mathsf{k}}(E)} ] ) \cong (K_0(L_{\mathsf{k}}(G)) , [ 1_{L_{\mathsf{k}}(G)} ] ).
$$
Thus, a positive answer to the Algebraic Kirchberg-Phillips Question implies a positive answer to the Invariance of the Cuntz-Splice Question.  

Now, suppose there is a positive answer to the Invariance of the Cuntz-Splice Question, that is, for any finite SPI graph $E$, $L_{\mathsf{k}}(E)$ and $L_{\mathsf{k}}(E_{u,-})$ are Morita equivalent for some $u \in E^0$.  Now assume $( K_0(L_{\mathsf{k}}(E)), [1_{L_{\mathsf{k}}(E)} ] ) \cong ( K_0(L_{\mathsf{k}}(F)), [1_{L_{\mathsf{k}}(F)} ] )$ with $E$ and $F$ finite SPI graphs.  If, in addition, $\det( \mathsf{I} - A_E^t) = \det( \mathsf{I} - A_F^t)$, then by \cite[Corollary~2.7]{ALPS11}, $L_{\mathsf{k}}( E)$ is isomorphic to $L_{\mathsf{k}}(F)$.  Assume $\det( \mathsf{I} - A_E^t)\neq \det( \mathsf{I} - A_F^t)$.  Since $K_0(L_{\mathsf{k}}(E)) \cong K_0(L_{\mathsf{k}}(F))$, we have that $-\det( \mathsf{I} - A_E^t) = \det( \mathsf{I} - A_F^t)$.  Therefore, $K_0 ( L_{\mathsf{k}}(E_{u,-} ) ) \cong K_0(L_{\mathsf{k}}( E )) \cong K_0(L_{\mathsf{k}}(F))$ and $\det( \mathsf{I} - A_{E_{u,-}}^t ) = -\det( \mathsf{I} - A_E^t) = \det( \mathsf{I} - A_F^t)$.  By \cite[Theorem~1.25]{ALPS11}, $L_{\mathsf{k}}(E_{u,-})$ and $L_{\mathsf{k}}(F)$ are Morita equivalent which implies $L_{\mathsf{k}}(E)$ and $L_{\mathsf{k}}(F)$ are Morita equivalent as $L_{\mathsf{k}}(E)$ and $L_{\mathsf{k}}(E_{u,-})$ are assumed to be Morita equivalent.  Consequently, $L_{\mathsf{k}}(E)$ and $L_{\mathsf{k}}(F)$ are isomorphic by \cite[Theorem~2.5]{ALPS11}.
\end{proof}


\end{document}